\theoremstyle{plain}
\newtheorem{theorem}{Theorem}[section]
\newtheorem{lemma}[theorem]{Lemma}
\newtheorem{proposition}[theorem]{Proposition}
\newtheorem{cor}[theorem]{Corollary}
\theoremstyle{definition}
\newtheorem{exa}[theorem]{Example}
\newtheorem{remark}[theorem]{Remark}
\numberwithin{equation}{section}
\begin{document}
\title[Zumkeller numbers and $k$-layered numbers]{Some properties of Zumkeller numbers and $k$-layered numbers}
	\author[P. J. Mahanta]{Pankaj Jyoti Mahanta}
	\address{Gonit Sora, Dhalpur, Assam 784165, India} 
	\email{pankaj@gonitsora.com}
\author[M. P. Saikia]{Manjil P. Saikia}
\address{School of Mathematics, Cardiff University, CF24 4AG, UK}
\email{SaikiaM@cardiff.ac.uk, manjil@gonitsora.com}
\author[D. Yaqubi]{Daniel Yaqubi}
\address{Faculty of Agriculture and Animal Science, University of Torbat-e Jam, Iran}
\email{daniel\_yaqubi@yahoo.es}
\subjclass[2020]{11A25, 11B75, 11D99.}
\keywords{Zumkeller numbers, perfect numbers, $k$-layered numbers, arithmetical functions, harmonic mean numbers.}
    \thanks{The second author is partially supported by the Leverhulme Trust Research Project Grant RPG-2019-083.}

    \newcommand{\manjil}[1]{\textcolor{blue}{#1}}
        \newcommand{\daniel}[1]{\textcolor{red}{#1}}

\begin{abstract}
Generalizing the concept of a perfect number is a Zumkeller or integer perfect number that  was introduced by Zumkeller in 2003. The positive integer $n$ is a Zumkeller number if its divisors can be partitioned into two sets with the same sum, which will be $\sigma(n)/2$. Generalizing even further, we call $n$ a $k$-layered number if its divisors can be partitioned into $k$ sets with equal sum.

In this paper, we completely characterize Zumkeller numbers with two distinct prime factors and give some bounds for prime factorization in case of Zumkeller numbers with more than two distinct prime factors. We also characterize $k$-layered numbers with two distinct prime factors and even $k$-layered numbers with more than two distinct odd prime factors. Some other results concerning these numbers and their relationship with practical numbers and Harmonic mean numbers are also discussed.
\end{abstract}

\maketitle

\section{Introduction}
Let $n$ be a positive integer. The canonical decomposition of $n$ as a product of primes can be
written as \[n=p_1^{\alpha_1}\cdots p_r^{\alpha_r},\] where $p_1<p_2<\cdots <p_r$. As usual we shall denote the \textit{number of divisors} of $n$ by $\tau(n)$ and the \textit{sum of the divisors} by $\sigma(n)$, so that
\[\tau(n)=(\alpha_1+1)(\alpha_2+1)\cdots (\alpha_r+1)\] and
\[\sigma(n)=\sum_{d|n}d=\left(\dfrac{p_1^{\alpha_1+1}-1}{p_1-1}\right)\cdots \left(\dfrac{p_r^{\alpha_r+1}-1}{p_r-1}\right).\]
The function $\sigma(n)$ is frequently studied in connection with perfect numbers. A number $n$ is called a \textit{perfect number} if $\sigma(n)=2n$. Even perfect numbers have been studied and classified since antiquity and we know from the work of Euclid and Euler that $n$ is an even perfect number if $n=2^{p-1}(2^{p}-1)$ where both $p$ and $2^p-1$ are primes (see for instance the book of Hardy and Wright \cite{HardyWright}). However, to date no odd perfect number has been found nor is known to exist.

This tantalizing question of the existence of odd perfect numbers has motivated many mathematicians to look at generalizations which are hoped to shed light on the original problem (for instance, see the second author's joint work with Laugier and Sarmah \cite{LaugierSaikiaSarmah} and with Dutta \cite{DuttaSaikia}, and the references therein). One of these generalization is the concept of a \textit{Zumkeller} or \textit{integer perfect number} that was introduced by Zumkeller in 2003. The positive integer $n$ is a Zumkeller number if its divisors can be partitioned into two sets with the same sum, which will be $\sigma(n)/2$ (see the OEIS sequence \href{https://oeis.org/A083207}{A083207} for a few values). For example, $20$ is a Zumkeller number because its divisors, $1,2,4,5,10$ and $20$ can be partitioned in the two sets
$A = \{1, 20\}$ and $B = \{2, 4, 5, 10\}$ whose common sum is $21$.

Undoubtedly, one of the most  important reasons for studying Zumkeller numbers is because each perfect number is a Zumkeller number. Clearly, the  necessary conditions for a positive integer $n$ to be a Zumkeller number are $\sigma(n)\geq 2n$ and $\sigma(n)$ is even. Therefore, the number of positive odd factors of $n$ must be even. Peng and Bhaskara Rao \cite[Proposition 3]{Pen} proved that the necessary and sufficient condition for a positive integer $n$ to be a Zumkeller number is that the value of $\dfrac{\sigma(n)-2n}{2}$ must be either $0$ or be a sum of distinct positive factors of $n$ excluding $n$ itself. So, we see that every perfect number is Zumkeller.

In 2008 Clark \textit{et. al.} \cite{Cla}, announced several results about Zumkeller, \textit{Half-Zumkeller} and \textit{practical} numbers and made some conjectures. A positive integer $n$ is said to be a Half-Zumkeller number if the proper positive divisors of $n$ can be partitioned into two disjoint parts so that the sums of the two
parts are equal. Also, $n$ is said to be practical if all positive integers
less than $n$ can be represented as sums of distinct factors of $n$. (See the OEIS sequences \href{https://oeis.org/A246198}{A246198 } and \href{https://oeis.org/A005153}{A005153} for the first few values of Half-Zumkeller and practical numbers respectively.) Bhakara Rao and Peng \cite{Pen} provided a proof for the first conjectures of Clark \textit{et. al.} \cite{Cla}, and proved the second conjecture in some special cases. They also proved several other results and posed a few open problems on Zumkeller numbers.

After the work of Bhaskar Rao and Peng \cite{Pen} there seems to be no further work on Zumkeller numbers from a purely number-theoretic perspective until recently when Jokar \cite{Jokar} studied a generalization of Zumkeller numbers called $k$-layered numbers. A natural number $n$ is called a $k$-layered number if its divisors can be partitioned into $k$ sets with equal sum. Clearly, Zumkeller numbers are $2$-layered and hence $k$-layered numbers are a proper generalization of Zumkeller numbers.

The goal of the present paper is to remedy this situation and initiate once again the study of Zumkeller numbers and its generalization. With this aim, we will characterize all Zumkeller numbers with two distinct prime factors in Section \ref{sec:two}, then we characterize all $k$-layered numbers with two distinct prime factors as well as even $k$-layered numbers with two distinct odd prime factors in Section \ref{sec:layer}, in Section \ref{sec:harmonic} we prove several results connecting Zumkeller numbers with harmonic mean numbers as well as prove some results about Zumkeller numbers with more than two distinct prime factors, and finally we end the paper with some concluding remarks in Section \ref{conc}. It is hoped that other mathematicians would find it fruitful to extend our results further.

 \section{Zumkeller numbers with two distinct prime factors}\label{sec:two}

 We have already stated that one of the necessary condition for a positive integer $n$ be a Zumkeller number is $\sigma(n)\geq 2n$. For distinct odd prime numbers $p_1$ and $p_2$, it is impossible for $n=p_1^{\alpha}p_2^{\beta}$ to be a Zumkeller number, for any $\alpha, \beta \in \mathbb{N}$. Since if $n=p_1^{\alpha}p_2^{\beta}$ be a Zumkeller number, then we must have
 \[\sigma(p_1^{\alpha}p_2^{\beta})=\left(\dfrac{p_1^{\alpha+1}-1}{p_1-1}\right)\left(\dfrac{p_2^{\beta+1}-1}{p_2-1}\right)\geq 2p_1^{\alpha}p_2^{\beta},\]
 which leads to
 \[(p_1^{\alpha+1}-1)(p_2^{\beta+1}-1)\geq 2p_1^{\alpha}p_2^{\beta}(p_1-1)(p_2-1).\]
 This gives us
 \begin{eqnarray*}
   p_1^{\alpha}p_2^{\beta}((p_1-2)(p_2-2)-2)+p_1^{\alpha+1}+p_2^{\beta+1}-1\leq 0,
\end{eqnarray*}
which is impossible (since $(p_1-2)(p_2-2)\geq 3$). This gives us the following result.

\begin{lemma}\label{lem:two}
There is no Zumkeller number of the form $p_1^\alpha p_2^\beta$ where $p_1$ and $p_2$ are distinct odd primes and $\alpha$ and $\beta$ are natural numbers.
\end{lemma}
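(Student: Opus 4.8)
The plan is to show that every integer of the form $n=p_1^\alpha p_2^\beta$ with $p_1,p_2$ distinct odd primes is \emph{deficient}, i.e.\ $\sigma(n)<2n$; since $\sigma(n)\geq 2n$ is a necessary condition for $n$ to be Zumkeller (as recorded in the Introduction), this will immediately give the claim. The cleanest route is to bound $\sigma(n)/n$ uniformly in the exponents. Using multiplicativity of $\sigma$ and the geometric series formula,
\[
\frac{\sigma(n)}{n}=\left(\sum_{i=0}^{\alpha}p_1^{-i}\right)\left(\sum_{j=0}^{\beta}p_2^{-j}\right)<\frac{1}{1-p_1^{-1}}\cdot\frac{1}{1-p_2^{-1}}=\frac{p_1}{p_1-1}\cdot\frac{p_2}{p_2-1}.
\]
Because $p_1$ and $p_2$ are distinct odd primes we may take $p_1\geq 3$ and $p_2\geq 5$, so the right-hand side is at most $\tfrac32\cdot\tfrac54=\tfrac{15}{8}<2$. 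Hence $\sigma(n)<2n$ for all $\alpha,\beta$, which is the desired contradiction.

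Alternatively, one can run the elementary manipulation already begun in the discussion preceding the lemma: assuming $n$ is Zumkeller forces $(p_1^{\alpha+1}-1)(p_2^{\beta+1}-1)\geq 2p_1^\alpha p_2^\beta(p_1-1)(p_2-1)$, and expanding both sides and collecting terms rewrites this as
\[
p_1^{\alpha}p_2^{\beta}\big((p_1-2)(p_2-2)-2\big)+p_1^{\alpha+1}+p_2^{\beta+1}-1\leq 0.
\]
Since $p_1\geq 3$ and $p_2\geq 5$ give $(p_1-2)(p_2-2)\geq 3>2$, every summand on the left is strictly positive, which is impossible. I would present whichever of the two forms fits the flow best; the second matches the computation already in the text, while the first makes transparent why no case analysis on the exponents is needed.

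There is no real obstacle here: the argument is entirely elementary, and the only point requiring a moment's care is to ensure the bound on $\sigma(n)/n$ (equivalently, the positivity of the terms in the displayed inequality) is \emph{strict} and \emph{uniform} in $\alpha$ and $\beta$, which is guaranteed by checking the extremal pair $(p_1,p_2)=(3,5)$. Consequently no induction or enumeration over exponents is required, and the proof is a few lines.
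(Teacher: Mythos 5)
Your proposal is correct, and your second variant is precisely the paper's own proof: the text preceding the lemma derives $p_1^{\alpha}p_2^{\beta}\big((p_1-2)(p_2-2)-2\big)+p_1^{\alpha+1}+p_2^{\beta+1}-1\leq 0$ from $\sigma(n)\geq 2n$ and notes this is impossible since $(p_1-2)(p_2-2)\geq 3$. Your first variant, bounding $\sigma(n)/n<\tfrac{p_1}{p_1-1}\cdot\tfrac{p_2}{p_2-1}\leq\tfrac{15}{8}<2$ via the extremal pair $(3,5)$, is just a cleaner repackaging of the same deficiency argument and is equally valid.
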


 So, a Zumkeller number with two distinct prime factors must be even. We characterize all such numbers in the remainder of this section. In the following, we always take $\alpha, \beta \in \mathbb{N}$ and $p$ to be a prime unless otherwise mentioned. We first find necessary and sufficient conditions for a positive integer $n=2^{\alpha}p$ to be a Zumkeller
number and then we extend our results for $n=2^{\alpha}p^{\beta}$. It is easy to see each prime number $p$ can be represented in the unique form
\begin{equation}\label{eq:p}
    p=1+2^{r_1}+2^{r_2}+2^{r_3}+\cdots+2^{r_l},
\end{equation}
where $r_1<r_2<r_3<\cdots <r_l$ for some $l$. Alternatively, this is the base $2$ representation of $p$. In the remainder of this paper, we let $\theta(A)=\sum_{i=1}^k a_i $ for a set $A=\{a_1,a_2,\ldots,a_k\}.$ We are now ready to prove our first results.

\begin{theorem}\label{Thm1}
Let $p$ be a prime number of the form \eqref{eq:p}. The positive integer $n=2^{\alpha}p$ is a Zumkeller number if and only if $\alpha \geq r_l$.
\end{theorem}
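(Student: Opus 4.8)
The plan is to prove both directions by explicitly using the criterion of Peng and Bhaskara Rao, namely that $n$ is Zumkeller if and only if $\tfrac{\sigma(n)-2n}{2}$ is $0$ or a sum of distinct proper divisors of $n$. For $n = 2^\alpha p$ the divisors are $1, 2, \dots, 2^\alpha$ and $p, 2p, \dots, 2^\alpha p$, and one computes $\sigma(n) = (2^{\alpha+1}-1)(p+1)$, so
\[
\frac{\sigma(n)-2n}{2} = \frac{(2^{\alpha+1}-1)(p+1) - 2^{\alpha+1}p}{2} = \frac{2^{\alpha+1} - p - 1}{2} = 2^\alpha - \frac{p+1}{2}.
\]
Since $p$ is odd this is an integer, and using \eqref{eq:p} we get $\tfrac{p+1}{2} = 2^{r_1-1} + \cdots + 2^{r_l-1}$ when $r_1 \geq 1$ (and a small adjustment if $r_1 = 0$, i.e. if one writes $p = 1 + 1 + \cdots$; note $r_1 \geq 1$ automatically since $p$ is odd, so $p = 1 + 2^{r_1} + \cdots$ forces $r_1 \geq 1$). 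So the target value is $N := 2^\alpha - (2^{r_1 - 1} + \cdots + 2^{r_l - 1})$.

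For the forward direction ($n$ Zumkeller $\Rightarrow \alpha \geq r_l$), I would argue by contradiction: if $\alpha < r_l$, then since $p = 1 + 2^{r_1} + \cdots + 2^{r_l} > 2^{r_l} > 2^\alpha$, every divisor of $n$ that is a multiple of $p$ exceeds $2^\alpha \geq n/p$... more cleanly, $\tfrac{\sigma(n)-2n}{2} = 2^\alpha - \tfrac{p+1}{2} < 0$ when $2^{\alpha+1} < p+1$, i.e. when $2^{\alpha+1} \leq p - 1 = 2^{r_1} + \cdots + 2^{r_l}$, which holds whenever $\alpha + 1 \leq r_l$, i.e. $\alpha < r_l$. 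A negative value cannot be a sum of positive divisors, and it is not $0$, so $n$ is not Zumkeller — contradiction. For the converse ($\alpha \geq r_l \Rightarrow n$ Zumkeller), I need to exhibit $N = 2^\alpha - (2^{r_1-1} + \cdots + 2^{r_l-1})$ as a sum of distinct proper divisors of $n$. When $\alpha \geq r_l > r_{l}-1 \geq r_i - 1$ for all $i$, the quantity $N$ can be rewritten as $2^\alpha - 2^{r_l - 1} - (2^{r_1-1} + \cdots + 2^{r_{l-1}-1})$; since $2^\alpha - 2^{r_l-1} = 2^{r_l-1} + 2^{r_l} + \cdots + 2^{\alpha - 1}$ is a sum of distinct powers of $2$ with exponents in $[r_l - 1, \alpha - 1]$, and the subtracted terms $2^{r_i - 1}$ for $i \leq l-1$ have exponents $< r_l - 1$, the whole expression $N$ is, after combining, a sum of distinct powers of $2$ each $\leq 2^{\alpha-1} < 2^\alpha$ — hence a sum of distinct proper divisors of $n$ from the list $\{1, 2, \dots, 2^\alpha\}$. (One must double-check there is no borrow/overflow: the positive part contributes exponents $\geq r_l - 1$ and the negative part removes some exponents $\leq r_l - 2$, so in binary these act on disjoint bit ranges except we are genuinely subtracting, so I'd instead write $N = (2^{\alpha-1} + \cdots + 2^{r_l-1}) + (2^{r_l-1} - 2^{r_{l-1}-1} - \cdots - 2^{r_1 - 1})$ and check the second bracket is a nonnegative sum of distinct powers of $2$ below $2^{r_l-1}$, since $2^{r_l - 1} > 2^{r_{l-1}} > 2^{r_{l-1}-1} + \cdots + 2^{r_1-1}$.)

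The main obstacle is the bookkeeping in the converse: one must be careful that the powers of $2$ appearing after the subtraction are genuinely \emph{distinct} and each is a proper divisor (i.e. at most $2^{\alpha}$, in fact at most $2^{\alpha-1}$), with no collisions between the "carry" part $2^{r_l-1} + \cdots + 2^{\alpha-1}$ and the residual part coming from $2^{r_l-1} - (2^{r_1-1}+\cdots+2^{r_{l-1}-1})$; the edge cases $l = 1$ (so $p = 1 + 2^{r_1}$, a Fermat-type prime) and $\alpha = r_l$ should be recorded separately but are immediate. I do not expect to need the multiples-of-$p$ divisors at all in the construction, which keeps things clean.
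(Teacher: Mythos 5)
Your route is genuinely different from the paper's and it does work, but two computational slips need fixing. The paper never invokes the Peng--Bhaskara Rao criterion: it builds the Zumkeller partition by hand, putting $A=\{2^\alpha p\}$, $B=\{p,2p,\dots,2^{\alpha-1}p\}$, computing $\sigma(n)/2-\theta(B)=2^\alpha+\tfrac{p-1}{2}=2^\alpha+2^{r_1-1}+\cdots+2^{r_l-1}$, and adjoining exactly those powers of $2$ to $B$; the forward direction is argued by saying the block containing $2^\alpha p$ is forced to be too heavy. Your criterion-based argument is shorter and has the added virtue of making transparent that the condition $\alpha\ge r_l$ is just $\sigma(n)\ge 2n$ in disguise (the content of the paper's Theorem~\ref{Thmo}), whereas the paper's version hands you the explicit partition. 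The slips: since $p=1+2^{r_1}+\cdots+2^{r_l}$, you have $\tfrac{p+1}{2}=1+2^{r_1-1}+\cdots+2^{r_l-1}$, not $2^{r_1-1}+\cdots+2^{r_l-1}$, so your target should be $N=2^\alpha-1-(2^{r_1-1}+\cdots+2^{r_l-1})$; and your displayed regrouping of $N$ does not balance (the two brackets sum to $2^\alpha-\sum_{i<l}2^{r_i-1}$, which exceeds even your stated $N$ by $2^{r_l-1}$). More importantly, all of the bit-by-bit bookkeeping you worry about is unnecessary: once you know $\alpha\ge r_l$ gives $\tfrac{p+1}{2}\le 2^{r_l}\le 2^\alpha$, you have $0\le N<2^\alpha$, so $N$ is either $0$ or equals its own binary expansion, a sum of distinct powers of $2$ drawn from $\{1,2,\dots,2^{\alpha-1}\}$, each a proper divisor of $n$; the Peng--Bhaskara Rao criterion then applies immediately, with no case analysis on $l$, on $r_1$, or on collisions of exponents. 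With that simplification your forward direction (which is already correct: $\alpha<r_l$ forces $2^{\alpha+1}\le 2^{r_l}\le p-1$, hence $N<0$) and converse together give a clean two-line proof.
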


 \begin{proof}
 Let $n$ be a Zumkeller number and let the set of all positive divisors of $n$ be
 \[D=\{1,2,2^2,\ldots,2^{\alpha},p,2p,2^2p,\ldots,2^{\alpha}p\}.\]
Put $A=\{2^{\alpha}p\}$ and $B=\{p,2p,\ldots,2^{\alpha-1}p\}$. We have
 \[\theta(B)=p+2p+\cdots+2^{\alpha-1}p=p(1+2+\cdots+2^{\alpha-1})=p(2^{\alpha}-1)=2^{\alpha}p-p.\]
If $r_l>\alpha$, it is impossible to represent the prime number $p$
as a sum of the elements in the set
$C=\{1,2,2^2,\ldots,2^{\alpha}\}$, so we cannot partition the set
$D$ into two disjoint sets since the set which will contain
$2^{\alpha}p$ will always have it's sum of elements greater than
the other. This is a contradiction.

For the converse, let $r_l\leq \alpha$, and $n=2^{\alpha}p$. Consider $A=\{2^{\alpha}p\}$, $B=\{p,2p,\ldots,2^{\alpha-1}p\}$ and $C=\{1,2,2^2,\ldots,2^{\alpha}\}$. Now,
 \begin{eqnarray*}
\dfrac{\sigma(n)}{2}-\theta(B)&=&\dfrac{\sigma(n)}{2}-\dfrac{2p(2^{\alpha}-1)}{2}\\
&=&\dfrac{2^{\alpha+1}p+2^{\alpha+1}-p-1-2^{\alpha+1}p+2p}{2}\\
&=&\dfrac{2^{\alpha+1}+p-1}{2}=2^{\alpha}+\dfrac{p-1}{2}.
 \end{eqnarray*}
  By substituting equation \eqref{eq:p}, we have
 \begin{eqnarray*}
 2^{\alpha}+\dfrac{p-1}{2}=2^{\alpha}+2^{r_1-1}+2^{r_2-1}+\cdots+2^{r_l-1}.
 \end{eqnarray*}
 Since $\alpha\geq r_l$, then, the set $C$ must be contain the numbers $2^{r_1-1}, 2^{r_2-1},\ldots, 2^{r_l-1}$ and $2^{\alpha}$.
  Put
 \[B^{\prime}=B\cup\{2^{\alpha},2^{r_1-1},2^{r_2-1},\cdots,2^{r_l-1}\}.\] Clearly, $\theta(B^{\prime})=\sigma(n)/2$. Now, we add the remaining elements of the set $C$ into the set $A$, which leads to $\theta(A)=\sigma(n)/2$. So, we have partitioned the divisors of $n=2^{\alpha}p$ into two disjoint set $A$ and $B$ with the same sum $\sigma(n)/2$. Hence, $n$ is a Zumkeller number.
 \end{proof}
 Bhakara Rao and Peng \cite[Proposition 2]{Pen} proved that the necessary condition for $n$ to be a Zumkeller number is $\sigma(n)\geq 2n$. The following theorem, shows this condition is necessary and sufficient for $n=2^{\alpha}p$.
 \begin{theorem}\label{Thmo}
 Let $p$ be a prime number. Then $n=2^{\alpha}p$ is a Zumkeller number if and only if $\sigma(n)\geq 2n$.
 \end{theorem}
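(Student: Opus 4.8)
The plan is to use Theorem \ref{Thm1} to reduce this to a purely arithmetic statement. Since Theorem \ref{Thm1} already tells us that $n = 2^\alpha p$ is a Zumkeller number precisely when $\alpha \geq r_l$, where $p = 1 + 2^{r_1} + \cdots + 2^{r_l}$ is the binary expansion of $p$, and since the forward direction ($n$ Zumkeller $\Rightarrow \sigma(n) \geq 2n$) is already known from Bhakara Rao and Peng \cite[Proposition 2]{Pen}, the only thing left to prove is: if $\sigma(2^\alpha p) \geq 2 \cdot 2^\alpha p$, then $\alpha \geq r_l$. Equivalently, I want to show the contrapositive is false in a strong sense — or directly, that $\alpha < r_l$ forces $\sigma(n) < 2n$.

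First I would write $\sigma(2^\alpha p) = (2^{\alpha+1} - 1)(p+1)$ and the inequality $\sigma(n) \geq 2n$ as $(2^{\alpha+1}-1)(p+1) \geq 2^{\alpha+1} p$, which simplifies to $2^{\alpha+1} \geq p + 1$, i.e. $p \leq 2^{\alpha+1} - 1$. So the condition $\sigma(n) \geq 2n$ is exactly equivalent to $p \leq 2^{\alpha+1} - 1$. Next, I would observe that since $p = 1 + 2^{r_1} + \cdots + 2^{r_l}$ with $r_1 < \cdots < r_l$, the leading term gives $p \geq 2^{r_l}$, and more precisely $2^{r_l} \leq p \leq 2^{r_l + 1} - 1$; in fact $r_l = \lfloor \log_2 p \rfloor$. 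Therefore $p \leq 2^{\alpha+1} - 1$ holds if and only if $2^{r_l} \leq p < 2^{\alpha+1}$, which (since both are integers and $r_l, \alpha+1$ are integers) is equivalent to $r_l \leq \alpha$. This chain of equivalences closes the loop: $\sigma(n) \geq 2n \iff p \leq 2^{\alpha+1}-1 \iff r_l \leq \alpha \iff n$ is Zumkeller by Theorem \ref{Thm1}.

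The only mildly delicate point — the "main obstacle," though it is quite mild — is the step identifying $p \leq 2^{\alpha+1} - 1$ with $r_l \leq \alpha$. One direction is immediate: $r_l \leq \alpha$ gives $p \leq 2^{r_l+1} - 1 \leq 2^{\alpha+1} - 1$. For the other direction, if $r_l \geq \alpha + 1$ then $p \geq 2^{r_l} \geq 2^{\alpha+1} > 2^{\alpha+1} - 1$, contradicting $p \leq 2^{\alpha+1}-1$; so $r_l \leq \alpha$. I would present this cleanly by just noting that $r_l$ is the position of the highest set bit in the binary representation of $p$, so $2^{r_l} \leq p \leq 2^{r_l+1}-1$, and then the equivalence with $\alpha \geq r_l$ is immediate. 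With that in hand, the theorem follows by combining with Theorem \ref{Thm1}, and no partition-construction argument needs to be repeated.
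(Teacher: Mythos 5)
Your proof is correct. The arithmetic reduction is the same as the paper's: both start from the known necessity of $\sigma(n)\geq 2n$ and both simplify $\sigma(2^{\alpha}p)\geq 2^{\alpha+1}p$ to $2^{\alpha+1}\geq p+1$. Where you diverge is in how you finish. The paper does not invoke Theorem \ref{Thm1} at this point; instead it substitutes the binary expansion of $p$ to get $2^{\alpha}\geq 1+2^{r_1-1}+\cdots+2^{r_l-1}$ and then rebuilds the Zumkeller partition from scratch (the sets $A=\{2^{\alpha}p\}$, $B=\{p,2p,\ldots,2^{\alpha-1}p\}$, $B'=B\cup\{2^{\alpha},2^{r_1-1},\ldots,2^{r_l-1}\}$, etc.), essentially repeating the construction already carried out in the proof of Theorem \ref{Thm1}. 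You instead observe that $p\leq 2^{\alpha+1}-1$ is equivalent to $\alpha\geq r_l$ because $2^{r_l}\leq p\leq 2^{r_l+1}-1$ (i.e.\ $r_l=\lfloor\log_2 p\rfloor$), and then quote Theorem \ref{Thm1} directly. This is a legitimate and more economical route: it isolates the only new content of the theorem (the equivalence of the two numerical conditions) and avoids duplicating the partition argument. The one point you rightly flag as needing care --- that $p\leq 2^{\alpha+1}-1$ forces $r_l\leq\alpha$ and conversely --- is handled correctly via the two-sided bound on $p$ in terms of its top bit, so there is no gap.
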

 \begin{proof}
 The necessary condition follows from Bhaskara Rao and Peng \cite[Proposition 2]{Pen}.

 Let $\sigma(n)\geq 2n$, then
 \begin{eqnarray*}
 \sigma(2^{\alpha}p)\geq 2^{\alpha+1}p\Longrightarrow (2^{\alpha+1}-1)(p+1)\geq 2^{\alpha+1}p\Longrightarrow 2^{\alpha+1}\geq p+1.
 \end{eqnarray*}
 Now, by substituting equation \eqref{eq:p}, we get
 \begin{eqnarray}\label{SS}
  2^{\alpha}\geq 1+2^{r_1-1}+2^{r_2-1}+\cdots+2^{r_l-1}.
 \end{eqnarray}
  By adding $2^{\alpha}$ to both side of the equation \eqref{SS}, we have
  \begin{eqnarray*}
  2^{\alpha+1}\geq 2^{\alpha}+2^{r_1-1}+2^{r_2-1}+\cdots+2^{r_l-1}+1.
 \end{eqnarray*}
 Hence
  \begin{eqnarray*}
  \sigma(2^{\alpha})\geq 2^{\alpha}+2^{r_1-1}+2^{r_2-1}+\cdots+2^{r_l-1}.
 \end{eqnarray*}
 
 Let $D=\{1,2,2^2,\ldots,2^{\alpha},p,2p,2^2p,\ldots,2^{\alpha}p\}$ be a set of all divisors of $n=2^{\alpha}p$. Put  $A=\{2^{\alpha}p\}$, $B=\{p,2p,\ldots,2^{\alpha-1}p\}$ and $C=\{1,2,2^2,\ldots,2^{\alpha}\}$. Obviously, $\theta(B)=2^{\alpha}p-p$ and
 \[\dfrac{\sigma(n)}{2}-\theta(B)=2^{\alpha}+2^{r_1-1}+2^{r_2-1}+\cdots+2^{r_l-1}.\]
 Now, \[\theta(C)\geq 2^{\alpha}+2^{r_1-1}+2^{r_2-1}+\cdots+2^{r_l-1},\] and the set $C$ contains all factors $2^{\alpha},2^{r_1-1},2^{r_2-1},\cdots,2^{r_l-1}$. Put
 \[B^{\prime}=B\cup\{2^{\alpha},2^{r_1-1},2^{r_2-1},\cdots,2^{r_l-1}\}.\]
 Hence, $\theta(B^{\prime})=\sigma(n)/2$.  Now, by adding the remaining elements of the set $C$ into the set $A$, we partitioned all divisors of $n$ into two disjoint sets $A$ and $B^{\prime}$ with the same sum. Hence $n$ is a Zumkeller number.
 \end{proof}
 We can prove the result of Euclid - Euler as a corollary of our results.
 \begin{cor}[Euclid - Euler]
Let $p=1+2+2^2+2^3+\cdots+2^{\alpha}$ be a prime number. Then $n=2^{\alpha}p$ is a perfect number.
 \end{cor}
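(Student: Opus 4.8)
The plan is to read off the corollary from Theorem~\ref{Thmo} (equivalently Theorem~\ref{Thm1}) by specialising to $p=1+2+2^2+\cdots+2^{\alpha}$. First I would record that this $p$ is precisely $2^{\alpha+1}-1$, being a finite geometric series; in the notation of \eqref{eq:p} this means $l=\alpha$ and $r_i=i$ for $1\le i\le l$, so in particular $r_l=\alpha$. Since $\alpha\ge r_l$, Theorem~\ref{Thm1} already tells us that $n=2^{\alpha}p$ is a Zumkeller number, so the real content of the corollary is the sharper claim that $n$ is in fact \emph{perfect}, i.e.\ that $\sigma(n)=2n$ holds with equality rather than merely $\sigma(n)\ge 2n$.

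To get the equality I would simply compute $\sigma(n)$ directly: $\sigma(2^{\alpha}p)=(2^{\alpha+1}-1)(p+1)$, and substituting $p+1=2^{\alpha+1}$ gives $\sigma(n)=(2^{\alpha+1}-1)\,2^{\alpha+1}=2\cdot 2^{\alpha}(2^{\alpha+1}-1)=2\cdot 2^{\alpha}p=2n$. Hence $\sigma(n)=2n$ and $n$ is perfect by definition. Alternatively, and perhaps more in the spirit of ``as a corollary of our results'', one can follow the chain of implications in the proof of Theorem~\ref{Thmo}: there the condition $\sigma(n)\ge 2n$ was shown to be equivalent to $2^{\alpha+1}\ge p+1$, which for $p=2^{\alpha+1}-1$ is an equality; reversing the (reversible) steps of that computation then yields $\sigma(n)=2n$. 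One could also phrase it through the Peng--Bhaskara Rao criterion quoted in the introduction: here $\frac{\sigma(n)-2n}{2}=0$, which is precisely the degenerate ``$0$'' case of their characterisation, and a number with $\sigma(n)=2n$ is exactly a perfect number.

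There is essentially no obstacle to overcome: the only point worth isolating is the elementary identity $1+2+\cdots+2^{\alpha}=2^{\alpha+1}-1$, after which everything reduces to a one-line substitution. If I wanted the deduction to look maximally like a corollary rather than a recomputation, I would keep the presentation at the level of ``apply Theorem~\ref{Thmo} and track the equality case'', since that is exactly what promotes the Zumkeller conclusion to the perfect-number conclusion.
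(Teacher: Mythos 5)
Your argument is correct and complete. For comparison: the paper offers no proof at all beyond the single remark that ``both the corollaries follow directly from Theorem~\ref{Thm1}.'' You rightly point out that Theorem~\ref{Thm1} by itself only yields that $n=2^{\alpha}p$ is a Zumkeller number (since $r_l=\alpha$ here), and that the stronger conclusion of \emph{perfection} requires tracking the equality case. Your one-line computation $\sigma(2^{\alpha}p)=(2^{\alpha+1}-1)(p+1)=(2^{\alpha+1}-1)\cdot 2^{\alpha+1}=2n$, using $p=2^{\alpha+1}-1$, supplies exactly the step the paper glosses over; your alternative phrasings (equality case of $2^{\alpha+1}\ge p+1$ in Theorem~\ref{Thmo}, or the ``$0$'' case of the Peng--Bhaskara Rao criterion) are equally valid. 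In short, your write-up is more careful than the paper's, which arguably overstates what Theorem~\ref{Thm1} alone delivers.
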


 \begin{cor}\label{Corr}
 Let $2^{\alpha}<p<2^{\alpha+1}$ be a prime number. Then $n=2^{\alpha}p$ is a Zumkeller number.
 \end{cor}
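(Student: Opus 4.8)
The plan is to obtain this as an immediate corollary of the two preceding theorems, each of which gives a one-line argument. I would first go through Theorem~\ref{Thmo}: since $n=2^{\alpha}p$, it suffices to check the single numerical condition $\sigma(n)\geq 2n$. Writing $\sigma(2^{\alpha}p)=(2^{\alpha+1}-1)(p+1)$ and recalling from the proof of Theorem~\ref{Thmo} that $\sigma(2^{\alpha}p)\geq 2^{\alpha+1}p$ is equivalent to $2^{\alpha+1}\geq p+1$, the hypothesis $p<2^{\alpha+1}$ gives $p+1\leq 2^{\alpha+1}$, so the inequality holds and Theorem~\ref{Thmo} yields that $n$ is a Zumkeller number.

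Alternatively, I would argue through Theorem~\ref{Thm1}. Write $p$ in the form~\eqref{eq:p}, say $p=1+2^{r_1}+2^{r_2}+\cdots+2^{r_l}$ with $r_1<r_2<\cdots<r_l$; this is just the base-$2$ expansion of $p$. The condition $2^{\alpha}<p<2^{\alpha+1}$ says precisely that the most significant bit of $p$ sits in position $\alpha$, i.e. $r_l=\alpha$. Hence $\alpha\geq r_l$, and Theorem~\ref{Thm1} immediately gives that $n=2^{\alpha}p$ is a Zumkeller number. Both routes are essentially equivalent, since in the proof of Theorem~\ref{Thmo} the inequality $2^{\alpha+1}\geq p+1$ was itself shown to force $r_l\leq\alpha$.

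There is no real obstacle here; the only point requiring a moment's care is the boundary case. Since $p$ is an odd prime with $p>2^{\alpha}$ we cannot have $p=2^{\alpha}$, so the lower bound is harmless, while the upper bound $p<2^{\alpha+1}$ is strict, giving $2^{\alpha+1}\geq p+1$ with equality possible only when $p=2^{\alpha+1}-1$ is a Mersenne prime — and that is exactly the Euclid--Euler perfect-number situation already recovered above, so it is genuinely included rather than excluded. I would therefore present the proof in a single short paragraph citing Theorem~\ref{Thmo} (or Theorem~\ref{Thm1}) and noting $r_l=\alpha$.
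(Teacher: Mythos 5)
Your proof is correct and matches the paper, which disposes of this corollary with the single remark that it follows directly from Theorem~\ref{Thm1}; your observation that $2^{\alpha}<p<2^{\alpha+1}$ forces $r_l=\alpha$ is exactly the point being used. The alternative route via Theorem~\ref{Thmo} is also valid and, as you note, equivalent.
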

 
 \noindent Both the corollaries follow directly from Theorem \ref{Thm1}.
 
 We can now look at the more general case when $n=2^\alpha p^\beta$, where $\beta>0$. The proofs in this case are a bit more involved, and our main result of this section is now stated below.

 \begin{theorem}\label{Thm4}
Let $n=2^{\alpha}p^{\beta}$ be a positive integer. Then $n$ is a Zumkeller number if and only if $p\leq 2^{\alpha+1}-1$ and $\beta$ is an odd number.
\end{theorem}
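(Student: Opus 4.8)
The plan is to separate the two implications, reading off $\beta$ odd and $p\le 2^{\alpha+1}-1$ from the two standard necessary conditions (that $\sigma(n)$ be even and that $\sigma(n)\ge 2n$), and then, for the converse, producing the partition by reducing to the case $n=2^{\alpha}p$ already settled in Theorem \ref{Thmo}. For necessity, since $p$ is odd I would write $\sigma(2^{\alpha}p^{\beta})=(2^{\alpha+1}-1)(1+p+\cdots+p^{\beta})$ as the product of the odd number $2^{\alpha+1}-1$ with a sum of $\beta+1$ odd terms; this is even exactly when $\beta+1$ is even. As a Zumkeller number has $\sigma(n)$ even (the two parts sum to $\sigma(n)/2$), this forces $\beta$ odd.

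For the bound on $p$, I would use that $\sigma(n)\ge 2n$ is necessary \cite[Proposition 2]{Pen}. If instead $p>2^{\alpha+1}-1$, then, $p$ being odd and $2^{\alpha+1}$ even, $p\ge 2^{\alpha+1}$, hence $(2^{\alpha+1}-1)p\le 2^{\alpha+1}(p-1)$, and therefore
\[\sigma(n)=(2^{\alpha+1}-1)\,\frac{p^{\beta+1}-1}{p-1}<(2^{\alpha+1}-1)\,\frac{p}{p-1}\,p^{\beta}\le 2^{\alpha+1}p^{\beta}=2n,\]
contradicting $\sigma(n)\ge 2n$; so $p\le 2^{\alpha+1}-1$.

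For the converse, assume $\beta=2s+1$ and $p\le 2^{\alpha+1}-1$ (which forces $\alpha\ge 1$, as $p\ge 3$). The key observation is that $\sigma(2^{\alpha}p)=(2^{\alpha+1}-1)(p+1)\ge 2^{\alpha+1}p$ is equivalent to $p\le 2^{\alpha+1}-1$, so by Theorem \ref{Thmo} the number $m=2^{\alpha}p$ is Zumkeller; fix a partition $D_m=A_0\sqcup B_0$ of its divisor set with $\theta(A_0)=\theta(B_0)=\sigma(m)/2$. The divisor set of $n=2^{\alpha}p^{2s+1}$ then decomposes as the disjoint union $D_n=\bigsqcup_{k=0}^{s}p^{2k}D_m$, since $p^{2k}D_m=\{\,2^{i}p^{j}:0\le i\le\alpha,\ j\in\{2k,2k+1\}\,\}$ and these blocks sweep out $j=0,1,\dots,2s+1$ exactly once each (this is precisely where oddness of $\beta$ enters). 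Taking $A=\bigcup_{k=0}^{s}p^{2k}A_0$ and $B=\bigcup_{k=0}^{s}p^{2k}B_0$ gives a partition of $D_n$ with $\theta(A)=\big(\sum_{k=0}^{s}p^{2k}\big)\theta(A_0)=\big(\sum_{k=0}^{s}p^{2k}\big)\theta(B_0)=\theta(B)$, so $n$ is Zumkeller.

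Everything here is routine arithmetic; the one substantive idea is spotting the block decomposition $D_n=\bigsqcup_k p^{2k}D_m$, which collapses the whole $\beta$-odd case to the base case $n=2^{\alpha}p$. The only point requiring care is keeping the inequality in the necessity argument strict — it comes from $\frac{p^{\beta+1}-1}{p-1}<\frac{p^{\beta+1}}{p-1}$ — so that $p=2^{\alpha+1}-1$ is not wrongly excluded while $p\ge 2^{\alpha+1}$ genuinely yields $\sigma(n)<2n$.
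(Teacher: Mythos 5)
Your proof is correct and follows essentially the same route as the paper: oddness of $\beta$ from the parity of $\sigma(n)$, the bound $p\le 2^{\alpha+1}-1$ from $\sigma(n)\ge 2n$, and for sufficiency the same block decomposition of the divisor set of $2^{\alpha}p^{\beta}$ into translates $p^{2k}D_m$ of the divisor set of $2^{\alpha}p$, whose Zumkeller partition is lifted blockwise (the paper cites Theorem \ref{Thm1} where you cite Theorem \ref{Thmo}, an immaterial difference). Your contrapositive derivation of the bound on $p$ is slightly cleaner than the paper's algebraic manipulation, but the substance is identical.
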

\begin{proof}
Obviously if $n=2^{\alpha}p^{\beta}$ is a Zumkeller number then $\sigma(n)$ must be even. Clearly $\beta$ must be an odd number, since
\[\sigma(n)=\sigma(2^{\alpha}p^{\beta})=\dfrac{2^{\alpha+1}-1}{2-1}\times(1+p+p^2+\cdots+p^{\beta})\]
is even, which means $(1+p+\cdots + p^\beta)$ must be even. Hence $\beta$ must be an odd number.

It is sufficient to show $p\leq 2^{\alpha+1}-1$. We have $\sigma(n)\geq 2n$, then
 \[(2^{\alpha+1}-1)\left(\dfrac{p^{\beta+1}-1}{p-1}\right)\geq 2^{\alpha+1}p^{\beta}.\]Hence
 \[2^{\alpha+1}p^{\beta}-2^{\alpha+1}\geq p^{\beta+1}-1.\] After some simple manipulation, from the above inequality we get
 \[
 2^{\alpha+1}-1\geq \frac{p^{\beta+1}-p^\beta}{p^\beta -1} = p-\frac{p^\beta -p}{p^\beta -1},
 \]
which leads to $2^{\alpha+1}-1\geq p$.

Now, let $\beta$ be an odd number and $2^{\alpha+1}-1\geq p$. We shall prove that $n$ is a Zumkeller number.

Let $D$ be the set of divisors of $2^\alpha p$, and let $D=A\cup B$ be the Zumkeller partition for $2^{\alpha}p$ as in the proof of Theorem \ref{Thm1}. Using these sets, we create a
new partitions for all divisors of
$n=2^{\alpha}p^{\beta}$.

 Let $D^\prime$ be a set of all divisors $n=2^{\alpha}p^{\beta}$ and for each $1\leq i\leq \dfrac{\beta -1}{2}$ consider $A_i=p^{2i}A$ and $B_i=p^{2i}B$. It is easy to see that
\[(A\cup B) \cup (\bigcup_{i=1}^{ \frac{\beta -1}{2}}(A_i\cup B_i))=D^\prime.\]
Put $A^{\prime}=A\cup A_1\cup\cdots\cup A_i$ and $B^{\prime}=B\cup B_1\cup \cdots\cup B_i$ with $1\leq i\leq \dfrac{\beta -1}{2}$. Clearly the sets $A^{\prime}$ and $B^{\prime}$ are disjoint subsets of $D^\prime$ such that $D^\prime=A^{\prime}\cup B^{\prime}$ and $\theta(A^{\prime})=\theta(B^{\prime})$.  This completes our proof.
 \end{proof}
 
 An easy corollary of the above result which connects it to Theorem \ref{Thm1} is the following.
 
 \begin{cor}
 $2^{\alpha}p$ is a Zumkeller number if and only
if
 $2^{\alpha}p^{\beta}$ is a Zumkeller number, where $\beta$
is an odd number.
 \end{cor}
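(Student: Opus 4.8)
The plan is to deduce this immediately from Theorem \ref{Thm4}, using the observation that the exponent $1$ is odd, so that the case of $2^{\alpha}p$ is literally a special case of that theorem. First I would apply Theorem \ref{Thm4} with the odd exponent $\beta$: it gives that $n=2^{\alpha}p^{\beta}$ is a Zumkeller number if and only if $p\leq 2^{\alpha+1}-1$. Next I would apply the very same theorem with $\beta$ replaced by $1$ (which is odd), obtaining that $2^{\alpha}p=2^{\alpha}p^{1}$ is a Zumkeller number if and only if $p\leq 2^{\alpha+1}-1$. Since the two characterizing conditions are literally the same inequality $p\leq 2^{\alpha+1}-1$, the two Zumkeller properties are equivalent, which is exactly the claim.

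As an alternative for the $2^{\alpha}p$ half of the statement, one can instead invoke Theorem \ref{Thmo}: it says $2^{\alpha}p$ is Zumkeller if and only if $\sigma(2^{\alpha}p)\geq 2\cdot 2^{\alpha}p$, and the manipulation carried out in its proof shows this inequality is equivalent to $2^{\alpha+1}\geq p+1$, i.e.\ $p\leq 2^{\alpha+1}-1$. This again matches the condition produced by Theorem \ref{Thm4} for $2^{\alpha}p^{\beta}$, so the equivalence follows by comparing the two conditions.

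There is no real obstacle here: the entire content is already packaged in Theorem \ref{Thm4} (together with Theorem \ref{Thmo}), and the corollary is just the remark that the characterizing inequality $p\leq 2^{\alpha+1}-1$ is independent of the odd exponent $\beta$. The only point that needs to be stated explicitly is that $\beta=1$ is a permissible odd value, so that Theorem \ref{Thm4} indeed applies to $2^{\alpha}p$ on the nose.
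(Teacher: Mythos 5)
Your proposal is correct and essentially matches the paper's argument: both reduce each statement to the single characterizing inequality $p\leq 2^{\alpha+1}-1$ and observe that it does not depend on the odd exponent $\beta$. The only cosmetic difference is that the paper cites Theorem \ref{Thm1} (via the equivalent condition $\alpha\geq r_l$) for the $2^{\alpha}p$ side, whereas you specialize Theorem \ref{Thm4} to $\beta=1$; both routes are valid.
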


\begin{proof}
Let $2^{\alpha}p$ be a Zumkeller number. Then Theorem \ref{Thm1}
implies $p\leq 2^{\alpha+1}-1$. Since $\beta$ is an odd number, we
get $2^{\alpha}p^{\beta}$ is a Zumkeller number.

Conversely let $2^{\alpha}p^{\beta}$ be a Zumkeller number. Then
Theorem \ref{Thm4} implies $p\leq 2^{\alpha+1}-1$. Therefore
$2^{\alpha}p$ is a Zumkeller number.
\end{proof}

 The following fact from Bhaskara Rao and Peng \cite{Pen} and Clark \textit{et. al.} \cite{Cla} gives a method of generating new Zumkeller numbers from
a known Zumkeller number.
 \begin{lemma}[Bhaskara Rao - Peng]\cite[Theorem 4]{Pen}\label{Peng}
 If $n$ is a Zumkeller number and $p$ is a prime with $(n, p) = 1$, then $np^l$ is Zumkeller for any positive integer $l$.
 \end{lemma}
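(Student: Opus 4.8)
The plan is to lift the Zumkeller partition of $n$ to one of $np^l$ by stratifying the divisors of $np^l$ according to the power of $p$ that they contain, which is the same device used in the proof of Theorem~\ref{Thm4}. Since $n$ is a Zumkeller number, I would fix a partition $D = A\cup B$ of the set $D$ of divisors of $n$ with $\theta(A)=\theta(B)=\sigma(n)/2$.

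First I would record the structural fact that, because $\gcd(n,p)=1$, every divisor of $np^l$ is uniquely of the form $dp^{j}$ with $d\mid n$ and $0\le j\le l$; hence the divisor set of $np^l$ is the \emph{disjoint} union $\bigcup_{j=0}^{l}p^{j}D$. For each fixed $j$, the scaled sets $p^{j}A$ and $p^{j}B$ partition $p^{j}D$ and satisfy $\theta(p^{j}A)=\theta(p^{j}B)=p^{j}\sigma(n)/2$.

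Next I would assemble $A'=\bigcup_{j=0}^{l}p^{j}A$ and $B'=\bigcup_{j=0}^{l}p^{j}B$. The sets $p^{j}A$ (respectively $p^{j}B$) are pairwise disjoint for distinct $j$, again by unique factorization, so $A'$ and $B'$ are disjoint, $A'\cup B'$ is the full divisor set of $np^l$, and
\[
\theta(A')=\theta(B')=\frac{\sigma(n)}{2}\sum_{j=0}^{l}p^{j}=\frac{\sigma(n)\,\sigma(p^{l})}{2}=\frac{\sigma(np^{l})}{2},
\]
where the last equality uses the multiplicativity of $\sigma$ together with $\gcd(n,p)=1$. This exhibits a Zumkeller partition of $np^l$, completing the argument.

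There is no serious obstacle here: the construction is the same ``scaling by powers of $p$'' idea as in Theorem~\ref{Thm4}, but now no parity restriction on the exponent is required, because a genuine Zumkeller partition of $n$ itself — rather than merely of $2^{\alpha}p$ — is available as the base case, so the layers need not be paired up. The only point needing (routine) care is the verification that the layers $p^{j}D$ are genuinely disjoint and exhaust the divisors of $np^l$, which is precisely where the hypothesis $\gcd(n,p)=1$ enters.
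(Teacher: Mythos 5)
Your argument is correct and complete: the divisors of $np^l$ do decompose as the disjoint union $\bigcup_{j=0}^{l}p^{j}D$ precisely because $(n,p)=1$, and summing the scaled copies of $A$ and $B$ gives $\theta(A')=\theta(B')=\sigma(p^l)\sigma(n)/2=\sigma(np^l)/2$ by multiplicativity. Note that the paper itself offers no proof of this lemma --- it is quoted directly from Bhaskara Rao and Peng \cite[Theorem 4]{Pen} --- so there is nothing to compare against internally; your ``scale by $p^j$ and take unions over all layers'' construction is the standard proof of that cited result, and, as you observe, it is the same device the paper uses in Theorem~\ref{Thm4}, minus the parity pairing that is needed there only because the base partition in that theorem is for $2^{\alpha}p$ rather than for the number itself.
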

  For $1\leq i\leq \ell$, let $\beta_i$ be odd numbers and let $p_1=2<p_2=3<p_3=5<\cdots$ be the sequence of prime numbers. According to Lemma \ref{Peng}, we can generalize Theorem \ref{Thm4} for each $n=2^{\alpha}p_1^{\beta_1}\cdots p_{\ell}^{\beta_{\ell}}$ since
$(2^{\alpha}p_i^{\beta_i},\prod_{j=1,i\neq j}^{\ell}p_j^{\beta_j})=1$. Before doing so, we need the following consequence of Bertrand's postulate, the proof of which is left for the reader.

 \begin{lemma}\label{Pri}
 Let $p_1=2<p_2=3<p_3=5<\cdots$ be the sequence of prime numbers. Then $p_i<2^i$ for each $i>1$.
 \end{lemma}

 \begin{theorem}
  Let $p_1=2<p_2=3<p_3=5<\cdots$ be the sequence of prime numbers. Then $2^ip_i$ is a Zumkeller
  number for each $i>1$.
 \end{theorem}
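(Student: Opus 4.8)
The plan is to reduce the statement to the clean criterion already established for numbers of the form $2^\alpha p$ and then feed in the prime bound from Lemma \ref{Pri}. By Theorem \ref{Thmo} (equivalently, by the $\beta=1$ case of Theorem \ref{Thm4}, or by Theorem \ref{Thm1}), the number $2^\alpha p$ is a Zumkeller number if and only if $\sigma(2^\alpha p)\ge 2\cdot 2^\alpha p$; and the computation carried out in the proof of Theorem \ref{Thmo} shows that for $n=2^\alpha p$ this inequality is equivalent to $2^{\alpha+1}\ge p+1$, i.e. to $p\le 2^{\alpha+1}-1$. So it suffices to verify that $p_i\le 2^{i+1}-1$ for every $i>1$, and then apply this criterion with $\alpha=i$, $p=p_i$.

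The required bound is immediate from Lemma \ref{Pri}. For $i>1$ that lemma gives $p_i<2^i$, hence $p_i\le 2^i-1$; and since $2^i-1<2^{i+1}-1$ for all $i\ge 1$, we get $p_i\le 2^{i+1}-1$. Plugging $\alpha=i$ and $p=p_i$ into the equivalence above, $\sigma(2^i p_i)\ge 2\cdot 2^i p_i$, so $2^i p_i$ is a Zumkeller number for each $i>1$.

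There is essentially no obstacle here: the statement is a genuine corollary of the earlier characterisation. The only two things to be careful about are (i) to record explicitly the equivalence $\sigma(2^\alpha p)\ge 2^{\alpha+1}p \iff p\le 2^{\alpha+1}-1$ that is used inside the proof of Theorem \ref{Thmo}, so that Theorem \ref{Thmo} can be invoked in the form ``$2^\alpha p$ is Zumkeller $\iff p\le 2^{\alpha+1}-1$'', and (ii) to note that Lemma \ref{Pri} is precisely the consequence of Bertrand's postulate needed to control $p_i$. One might also remark that for large $i$ the inequality $p_i\le 2^{i+1}-1$ is far from tight, which is why this is a corollary rather than an independent result; the case $i>1$ is needed because $p_1=2$ does not have two distinct prime factors, so the construction simply does not apply to $2^1\cdot 2$.
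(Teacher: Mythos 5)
Your proposal is correct and follows essentially the same route as the paper: both reduce the claim to the bound $p_i\le 2^{i+1}-1$ via Lemma \ref{Pri} and then invoke the characterisation of Zumkeller numbers of the form $2^\alpha p$ (the paper cites Theorem \ref{Thm1}, whose condition $\alpha\ge r_l$ is exactly the inequality $p\le 2^{\alpha+1}-1$ you extract from Theorem \ref{Thmo}). The only cosmetic difference is which of the two equivalent criteria you quote, so nothing further is needed.
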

 \begin{proof} 
 From the above lemma we get $p_i<2^i$ for each $i>1$.
Therefore $p_i<2^{i+1}-1$ for each $i>1$. Theorem \ref{Thm1} implies $2^ip_i$ is a Zumkeller number for
each $i>1$.
 \end{proof}
 
We can prove a result similar to Theorem \ref{Thmo} for the more general case, which is done below.

\begin{theorem}
Let $n=2^{\alpha}p^{\beta}$ be a positive integer.  If
$\beta$ is odd, then $n$ is a Zumkeller number if and only if
$\sigma(n)\geq 2n$.
\end{theorem}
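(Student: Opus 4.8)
The plan is to deduce this directly from Theorem \ref{Thm4} together with the necessary condition of Bhaskara Rao and Peng, after observing that---for odd $\beta$---the inequality $\sigma(n)\geq 2n$ is exactly equivalent to the arithmetic condition $p\leq 2^{\alpha+1}-1$ appearing in Theorem \ref{Thm4}.

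First I would handle the forward direction: if $n=2^{\alpha}p^{\beta}$ is a Zumkeller number, then by Bhaskara Rao and Peng \cite[Proposition 2]{Pen} we immediately get $\sigma(n)\geq 2n$, with no use of the parity of $\beta$ at all. For the converse, I would assume $\beta$ is odd and $\sigma(n)\geq 2n$, and then simply reproduce the short manipulation already carried out in the proof of Theorem \ref{Thm4}: from
\[
(2^{\alpha+1}-1)\left(\frac{p^{\beta+1}-1}{p-1}\right)\geq 2^{\alpha+1}p^{\beta}
\]
one rearranges to $2^{\alpha+1}p^{\beta}-2^{\alpha+1}\geq p^{\beta+1}-1$ and hence
\[
2^{\alpha+1}-1\geq p-\frac{p^{\beta}-p}{p^{\beta}-1}\geq p-1,
\]
so that $p\leq 2^{\alpha+1}-1$. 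Since $\beta$ is odd by hypothesis, Theorem \ref{Thm4} now applies and yields that $n$ is a Zumkeller number.

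There is essentially no hard step here: the content is entirely contained in Theorem \ref{Thm4} and its proof, and the statement is really a repackaging that records the equivalence ``$\sigma(n)\geq 2n \iff p\leq 2^{\alpha+1}-1$'' valid under the standing assumption that $\beta$ is odd. The only point that needs a word of care is that the parity hypothesis on $\beta$ is genuinely needed for the ``if'' direction (without it $\sigma(n)$ need not even be even, so $n$ cannot be Zumkeller), whereas it is not needed for the ``only if'' direction; I would make this asymmetry explicit so the reader sees why the hypothesis is placed where it is. If one wished to avoid citing the proof of Theorem \ref{Thm4} a second time, an alternative is to state the algebraic equivalence $\sigma(2^{\alpha}p^{\beta})\geq 2^{\alpha+1}p^{\beta}\iff 2^{\alpha+1}-1\geq p$ as a one-line preliminary computation and then invoke Theorem \ref{Thm4} as a black box, but either route is routine.
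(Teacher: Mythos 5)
Your proposal is correct and follows essentially the same route as the paper: both directions reduce to Theorem \ref{Thm4} via the equivalence $\sigma(n)\geq 2n \iff p\leq 2^{\alpha+1}-1$ (the paper phrases this contrapositively, substituting $p=2^{\alpha+1}-1+x$ and checking the sign of $\sigma(n)-2n$, while you rerun the rearrangement from the proof of Theorem \ref{Thm4} directly, which is if anything cleaner). The only step worth one extra word is the final deduction $p\leq 2^{\alpha+1}-1$ from $2^{\alpha+1}-1\geq p-1$: you should either note that $\frac{p^{\beta}-p}{p^{\beta}-1}<1$ strictly, or that $p$ is odd so $p\leq 2^{\alpha+1}$ forces $p\leq 2^{\alpha+1}-1$.
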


\begin{proof}
The necessary condition is trivial, as it has been remarked already that $\sigma(n)\geq 2n$ is a necessary condition for any $n$ to be a Zumkeller number.

Suppose $n$ is not Zumkeller, then by Theorem \ref{Thm4} we have $2^{\alpha+1}<p+1$. Let, $p=2^{\alpha+1}-1+x$, for some positive integer $x$. Obviously
$x>1$ and we have $$\sigma(n)-2n=\dfrac{(2^{\alpha+1}-1)(p^{\beta+1}-1)-2^{\alpha+1}p^{\beta}(p-1)}{p-1}.$$ Now,
$(2^{\alpha+1}-1)(p^{\beta+1}-1)-2^{\alpha+1}p^{\beta}(p-1)=p^{\beta}(1-x)+(x-p)$. Since $1-x<0$ and $x-p<0$, the above expression is negative. Hence $\sigma(n)<2n$.

Again let $\sigma(n)<2n$ and $p=2^{\alpha+1}-1+x$. Then
$p^{\beta}(1-x)+(x-p)<0$. Which implies
$p^{\beta}-p<xp^{\beta}-x$, and therefore
$\dfrac{(p^{\beta-1}-1)p}{p^{\beta}-1}<x$. Therefore $x$ is
positive. which implies $p>2^{\alpha+1}-1$. 

Hence $p>2^{\alpha+1}-1$ if and only if $\sigma(n)<2n$. Therefore $p\leq 2^{\alpha+1}-1$ if and only if $\sigma(n)\geq
2n$. The proof is concluded by Theorem \ref{Thm4}.
\end{proof}

We defer some discussion on Zumkeller numbers with more than two distinct prime factors to Subsection \ref{sub:three}.

\section{Characterization of $k$-layered numbers}\label{sec:layer}

Recently, Jokar \cite{Jokar} studied a further generalization of Zumkeller numbers which he called $k$-layered numbers. A positive integer is said to be a $k$-layered number if its positive divisors can be partitioned into $k$ disjoint subsets of equal sum. Such a partition is then called a $k$-partition. We can see immediately that Zumkeller numbers are $2$-layered. 

The aim of this section is to characterize all $k$-layered numbers with two distinct odd prime divisors and extend some results to the more general cases. We shall use the facts that if $n$ is a $k$-layered number, then $k|\sigma(n)$ and $\sigma(n)\geq kn$.

\begin{proposition}
$p^{\alpha}$ is not a $k$-layered number for any $k\geq 3$
and for any prime $p$.
\end{proposition}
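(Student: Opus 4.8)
The plan is to exploit the fact that for a prime power the largest divisor $p^{\alpha}$ is, all by itself, strictly bigger than the sum of all of the smaller divisors. Concretely, the proper divisors of $p^{\alpha}$ add up to
\[
1+p+p^{2}+\cdots+p^{\alpha-1}=\frac{p^{\alpha}-1}{p-1}\le p^{\alpha}-1<p^{\alpha},
\]
the middle inequality being immediate from $p\ge 2$. Equivalently, $\sigma(p^{\alpha})<2p^{\alpha}$.

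Now suppose, for contradiction, that $p^{\alpha}$ admits a partition of its divisors into $k\ge 2$ blocks of equal sum $\sigma(p^{\alpha})/k$. The block containing $p^{\alpha}$ has sum at least $p^{\alpha}$, hence $\sigma(p^{\alpha})/k\ge p^{\alpha}$, i.e. $\sigma(p^{\alpha})\ge kp^{\alpha}\ge 2p^{\alpha}$, which contradicts the displayed bound. So no such partition exists for any $k\ge 2$, and in particular not for $k\ge 3$. Alternatively, one can phrase this purely combinatorially: since the sum of the divisors other than $p^{\alpha}$ is strictly less than $p^{\alpha}$, in any partition into at least two nonempty blocks the block containing $p^{\alpha}$ is strictly heavier than all the remaining blocks put together, so the blocks cannot share a common sum.

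I expect no real obstacle here; the estimate $\sigma(p^{\alpha})<2p^{\alpha}$ does all the work. The one thing worth recording is that the argument actually rules out $k=2$ as well, so prime powers are never Zumkeller either; the statement is restricted to $k\ge 3$ only as a matter of convenience, in order to list it alongside the other characterization results of this section.
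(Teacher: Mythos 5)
Your proof is correct and follows essentially the same route as the paper: both arguments reduce to the necessary condition $\sigma(n)\ge kn$ together with the bound $\sigma(p^{\alpha})<2p^{\alpha}$, which you establish cleanly via the sum of proper divisors. Your closing remark that the argument also excludes $k=2$ (so prime powers are never Zumkeller) is a correct and worthwhile observation, though not needed for the statement.
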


\begin{proof}
Let $n=p^{\alpha}$. Then $\sigma(n)\geq kn$ implies $(p-1)(1-k)\geq 0$ after some algebraic manipulation. Clearly $p-1>0$, so $1\geq k$ which is not possible.
\end{proof}

\begin{theorem}
For any distinct primes $p$ and $q$, $p^{\alpha}q^{\beta}$ is not a $k$-layered number for any $k\geq 3$.
\end{theorem}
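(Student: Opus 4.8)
The plan is to invoke only the necessary condition $\sigma(n)\geq kn$ for a $k$-layered number, combined with a uniform upper bound on the abundancy $\sigma(n)/n$ of a number with exactly two distinct prime factors. First I would assume without loss of generality that $p<q$, so $p\geq 2$ and $q\geq 3$. Writing $n=p^{\alpha}q^{\beta}$, one has
\[
\frac{\sigma(n)}{n}=\frac{p^{\alpha+1}-1}{p^{\alpha}(p-1)}\cdot\frac{q^{\beta+1}-1}{q^{\beta}(q-1)}<\frac{p}{p-1}\cdot\frac{q}{q-1},
\]
the inequality being strict because $p^{\alpha+1}-1<p^{\alpha+1}$ and likewise for $q$. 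Since $x\mapsto x/(x-1)$ is decreasing for $x>1$, the right-hand side is at most $\tfrac{2}{1}\cdot\tfrac{3}{2}=3$, and this value is never actually attained. Hence $\sigma(n)<3n$ for every such $n$.

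The second step is to combine this with the necessary condition: if $p^{\alpha}q^{\beta}$ were $k$-layered, then $\sigma(n)\geq kn$, which forces $k\leq \sigma(n)/n<3$, i.e.\ $k\leq 2$, contradicting $k\geq 3$. The theorem then follows. It is worth spelling out the borderline pair $p=2$, $q=3$ explicitly, since that is the only pair for which the crude bound $\tfrac{p}{p-1}\cdot\tfrac{q}{q-1}$ equals $3$: there one simply notes that $\sigma(2^{\alpha}3^{\beta})=(2^{\alpha+1}-1)(3^{\beta+1}-1)/2<2^{\alpha+1}\cdot 3^{\beta+1}/2=3\cdot 2^{\alpha}3^{\beta}$.

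I do not anticipate a genuine obstacle; the only point requiring care is keeping the product bound strict so that the pair $(2,3)$ does not slip through. An alternative route, closer in spirit to the proof of Lemma \ref{lem:two}, would be to expand the inequality $(p^{\alpha+1}-1)(q^{\beta+1}-1)\geq k\,p^{\alpha}q^{\beta}(p-1)(q-1)$ with $k\geq 3$ and derive a contradiction by estimating term by term, but the abundancy argument is shorter and dispenses with case analysis.
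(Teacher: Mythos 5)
Your proof is correct, and it is cleaner than the one in the paper. Both arguments start from the same necessary condition $\sigma(n)\geq kn$ and the same factorization of the abundancy $\sigma(n)/n$ into the two local factors $\frac{p^{\alpha+1}-1}{p^{\alpha}(p-1)}$ and $\frac{q^{\beta+1}-1}{q^{\beta}(q-1)}$. The paper, however, only bounds each factor by $2$ separately, which yields $k<4$; it must then dispose of $k=3$ by expanding $(p^{\alpha+1}-1)(q^{\beta+1}-1)\geq 3p^{\alpha}q^{\beta}(p-1)(q-1)$, substituting $q=p+x$, and running a sign analysis over the cases $p>2$, $p=2$ with $x>1$, and $p=2$ with $x=1$, with the case $k=4$ merely asserted to be ``exactly similar.'' Your observation that the primes are \emph{distinct} --- so at most one of them equals $2$, whence $\frac{p}{p-1}\cdot\frac{q}{q-1}\leq \frac{2}{1}\cdot\frac{3}{2}=3$ while the abundancy is \emph{strictly} below this product --- sharpens the a priori bound from $k<4$ to $k<3$ and eliminates the entire case analysis in one stroke. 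What your route buys is brevity and completeness (no case is deferred to the reader); what the paper's route buys is essentially nothing extra here, though its expanded inequality \eqref{pjm:a} is the template reused in the subsequent theorem on $2^{\alpha}pq$, where the boundary cases genuinely do survive and must be examined. Your explicit check of the pair $(2,3)$ is not logically necessary given the strictness of the first inequality, but it is a sensible safeguard at the unique point where the crude product bound is attained.
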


\begin{proof}
Let $n=p^{\alpha}q^{\beta}$. Then $\sigma(n)\geq kn$
implies $\left(\dfrac{p^{\alpha}-1}{p^{\alpha}(p-1)}+1\right)\left(\dfrac{q^{\beta}-1}{q^{\beta}(q-1)}+1\right)\geq
k$ after some algebraic manipulation. This gives us \[\left\{\dfrac{1}{p-1}\left(1-\dfrac{1}{p^{\alpha}}\right)+1\right\}\left\{\dfrac{1}{q-1}\left(1-\dfrac{1}{q^{\beta}}\right)+1\right\}\geq
k.\] Since $\dfrac{1}{p-1}\left(1-\dfrac{1}{p^{\alpha}}\right)$ and
$\dfrac{1}{q-1}\left(1-\dfrac{1}{q^{\beta}}\right)$ are less than $1$, therefore $k$ is at most $4$.

Let $k=3$, then $\sigma(n)\geq 3n$
implies
$\left(\dfrac{p^{\alpha+1}-1}{p-1}\right)\left(\dfrac{q^{\beta+1}-1}{q-1}\right)\geq
3p^{\alpha}q^{\beta}$, which gives us
\begin{equation}\label{pjm:a}
    p^{\alpha}q^{\beta}(-2pq+3p+3q-3)-p^{\alpha+1}-q^{\beta+1}+1\geq
0
\end{equation}

Now let $q=p+x$, where $x\geq 1$. Then
$$-2pq+3p+3q-3=x(3-2p)+2p(3-p)-3.$$ Obviously
this is negative for all $p>2$. If $p=2$, then $-2pq+3p+3q-3=-x+1$. This is negative for all
$x>1$. If $p=2$ and $x=1$, then $-2pq+3p+3q-3=0$. So in all cases the L.H.S. of the inequality \eqref{pjm:a} is negative. Thus we
get a contradiction and $k\neq 3$.

The case for $k=4$ is exactly similar and we can easily prove that it is not possible.
\end{proof}

One of the main results of this section is the following result.

\begin{theorem}\label{63}
If $n=2^\alpha p q$ where $\alpha\geq 1$ and $p$ and $q$ are distinct odd primes, then $n$ is not a $k$-layered number for $k>3$.

Furthermore, if $k=3$ then $n$ is a $3$-layered number only when $n=15.2^\alpha$ with $\alpha\geq 3$ or when $n=21.2^\alpha$ with $\alpha\geq 5$ and $\alpha$ is odd.
\end{theorem}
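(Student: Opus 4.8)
The plan is to exploit the two necessary conditions recalled just before the theorem: if $n$ is $k$-layered then $k\mid\sigma(n)$ and $\sigma(n)\ge kn$. Write $n=2^{\alpha}pq$ with, without loss of generality, $3\le p<q$ distinct odd primes (so $q\ge5$), and recall $\sigma(n)=(2^{\alpha+1}-1)(p+1)(q+1)$. For the first assertion, dividing $\sigma(n)\ge kn$ by $n$ yields
\[
k\ \le\ \frac{2^{\alpha+1}-1}{2^{\alpha}}\cdot\frac{p+1}{p}\cdot\frac{q+1}{q}\ <\ 2\cdot\frac43\cdot\frac65\ =\ \frac{16}{5}\ <\ 4,
\]
using $\frac{2^{\alpha+1}-1}{2^{\alpha}}<2$, $\frac{p+1}{p}\le\frac43$, $\frac{q+1}{q}\le\frac65$; hence $k\le 3$.

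For the case $k=3$ I would first locate $p$ and $q$. If $p\ge5$ then $q\ge7$ and the same product is $<2\cdot\frac65\cdot\frac87=\frac{96}{35}<3$, contradicting $\sigma(n)\ge 3n$; hence $p=3$. With $p=3$ the inequality $\sigma(n)\ge 3n$ reads $4(2-2^{-\alpha})(q+1)\ge 9q$; since the left-hand side is strictly below $8(q+1)$ one needs $8(q+1)>9q$, i.e. $q<8$, so $q\in\{5,7\}$. Solving the inequality for each value gives $\alpha\ge3$ when $q=5$ and $\alpha\ge5$ when $q=7$. Finally I impose $3\mid\sigma(n)$: when $q=5$, $\sigma(n)=24(2^{\alpha+1}-1)$ is divisible by $3$ for every $\alpha$, giving the family $n=15\cdot2^{\alpha}$, $\alpha\ge3$; when $q=7$, $\sigma(n)=32(2^{\alpha+1}-1)$, and as $2\equiv-1\pmod3$ this is divisible by $3$ exactly when $\alpha+1$ is even, i.e. $\alpha$ odd, giving $n=21\cdot2^{\alpha}$ with $\alpha\ge5$ odd. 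This proves the ``only when'' statement.

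To complete the characterization one checks that every such $n$ is in fact $3$-layered by exhibiting explicit $3$-partitions. For $n=15\cdot2^{\alpha}$ with $\alpha\ge3$ one can take
\[
D_1=\{15\cdot2^{\alpha}\}\cup\{2^{j}:3\le j\le\alpha-1\},\qquad
D_2=\{3\cdot2^{j},\,5\cdot2^{j}:0\le j\le\alpha\},
\]
\[
D_3=\{1,2,4,2^{\alpha}\}\cup\{15\cdot2^{j}:0\le j\le\alpha-1\},
\]
each of which sums to $8(2^{\alpha+1}-1)=\sigma(n)/3$, as a one-line computation confirms. For $n=21\cdot2^{\alpha}$ with $\alpha\ge5$ odd one starts with $D_1=\{21\cdot2^{\alpha}\}\cup\{2^{j}:j=5,7,\ldots,\alpha-2\}$, again of sum $\sigma(n)/3$, and splits the remaining divisors into two equal-sum classes, using that the divisors $1,2,\ldots,2^{\alpha}$ realize every integer in $[0,2^{\alpha+1}-1]$ as a subset sum; this is most cleanly organised by induction on $\alpha$ in steps of two, anchored at the base case $n=21\cdot2^{5}$.

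I expect the main obstacle to be this last point: showing the leftover divisors for $21\cdot2^{\alpha}$ genuinely split into two equal halves takes some care, because one cannot keep the ``odd-part'' groups intact (the odd-part-$1$ group alone is not a clean fraction of $\sigma(n)/3$), so the bookkeeping must be arranged either via a carefully chosen induction step or by writing the required subset sums in base $2$. The necessity half, by contrast, is a routine combination of the two displayed inequalities with the mod-$3$ test.
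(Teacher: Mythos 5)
Your necessity argument is correct and runs along the same lines as the paper's: the bound $k<4$ from $\sigma(n)\ge kn$ (your constants $2\cdot\frac43\cdot\frac65=\frac{16}{5}$ are in fact slightly sharper than the paper's, which only uses $\frac{p+q+1}{pq}\le 1$), the forcing of $p=3$ and $q\in\{5,7\}$, the thresholds $\alpha\ge3$ and $\alpha\ge5$, and the divisibility test $3\mid\sigma(n)$ all check out. Your explicit $3$-partition for $n=15\cdot2^{\alpha}$ is also correct and complete (including the degenerate range at $\alpha=3$), and is arguably tidier than the paper's, which treats $\alpha=3$ separately.

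However, there is a genuine gap in the sufficiency half for $n=21\cdot2^{\alpha}$, which you yourself flag: after extracting $D_1=\{21\cdot2^{\alpha}\}\cup\{2^{j}: j=5,7,\dots,\alpha-2\}$ (correctly shown to sum to $\sigma(n)/3$), you never actually exhibit the split of the remaining divisors into two equal-sum classes; you only gesture at an induction or a base-$2$ bookkeeping argument. Since the theorem asserts that these $n$ \emph{are} $3$-layered, this step cannot be omitted. The missing piece is easier than you anticipate and needs no induction: take
\[
D_2=\bigl\{d\cdot 2^{2i}\ :\ d\in\{1,3,7,21\},\ 0\le 2i\le \alpha-1\bigr\},
\]
i.e.\ the \emph{even} powers of $2$ times each of the four odd divisors. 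Since $1+3+7+21=32$ and $\sum_{i=0}^{(\alpha-1)/2}4^{i}=\frac{2^{\alpha+1}-1}{3}$ (here $\alpha$ odd is used), one gets $\theta(D_2)=\frac{32(2^{\alpha+1}-1)}{3}=\sigma(n)/3$; moreover $D_2$ is disjoint from $D_1$ because every power of $2$ occurring in $D_1$ has an odd exponent ($5,7,\dots,\alpha-2$ and the factor $2^{\alpha}$ in $21\cdot2^{\alpha}$). The third class is then the complement, and its sum is forced. This is exactly the paper's construction; note that your remark that ``one cannot keep the odd-part groups intact'' is what misled you --- the resolution is not to keep any single odd-part group intact but to take the even-exponent slice across all four odd parts simultaneously, exploiting $1+3+7+21=32$.
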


\begin{proof}
We have, $\sigma(n)\geq kn$ implies
 \begin{equation}\label{pjmx}
     (2^{\alpha+1}-1)(p+1)(q+1)\geq k2^{\alpha}pq
 \end{equation}
 which gives us $\left(2-\dfrac{1}{2^{\alpha}}\right)\left(1+\dfrac{p+q+1}{pq}\right)\geq k$. Since $p$ and $q$ are primes, so $\dfrac{p+q+1}{pq}\leq 1$. Hence $4>k$. This proves the first part.

Now let $k=3$. Then $\sigma(n)\geq 3n$ implies $2^{\alpha}(-pq+2p+2q+2)-(pq+p+q+1)\geq 0$, and clearly we must have $-pq+2p+2q+2>0$. Let $q=p+x$, where $x\geq 2$. Then we get $-p^2-px+2p+2q+2x+2>0$
\begin{equation}\label{pjmxi}
    p^2-4p<2x-px+2,
\end{equation}
which gives us $p(p-5)<(2-p)(x+1)$.

The R.H.S. of this inequality is negative, so the L.H.S. must be negative. Hence $p-5<0$ which implies $p=3$. Therefore \eqref{pjmxi} implies $9-12<2x-3x+2$
 which gives $x<5$ and so $q=5$ or $7$.

If $q=5$, then \eqref{pjmx} implies $(2^{\alpha+1}-1)4.6\geq
3.2^{\alpha}.3.5$. After some algebraic manipulation we get from this $\alpha\geq 3$. If $q=7$, then \eqref{pjmx} implies $(2^{\alpha+1}-1)4.8\geq
3.2^{\alpha}.3.7$ from which we get $\alpha\geq 5$. So, we have the two cases which we need to work with now.

\textbf{Case 1:} $n=2^{\alpha}.3.5$, where $\alpha\geq 3$.

If $n$ is a $k$-layered number, then it is necessary that
$k|\sigma(n)$. Here $\sigma(n)=(2^{\alpha+1}-1)4.6$, which is
divisible by 3. Let $\alpha>3$.

Now 
\begin{align*}
    \dfrac{\sigma(n)}{3}-n=&~2^\alpha -8\\
    =&~ (3+5)(1+2+2^2+\dots+2^{\alpha-4}).
\end{align*}
Therefore
$$\dfrac{\sigma(n)}{3}=n+(3+5)(1+2+2^2+\dots+2^{\alpha-4}).$$ Let
$A=\{n,3,2.3,2^2.3,\dots,2^{\alpha-4}.3,5,2.5,2^2.5,\dots,2^{\alpha-4}.5\}$

Again
\begin{align*}
    \dfrac{\sigma(n)}{3}=&~8(2^{\alpha+1}-1)\\
    =&~8(1+2+2^2+\dots+2^{\alpha})\\
    =&~8+16(1+2+2^2+\dots+2^{\alpha-1})\\
    =&~8+(1+2+2^2+\dots+2^{\alpha-1})+15(1+2+2^2+\dots+2^{\alpha-1})\\
    =&~8+2^{\alpha}-1+15(1+2+2^2+\dots+2^{\alpha-1})\\
    =&~1+2+2^2+2^{\alpha}+3.5(1+2+2^2+\dots+2^{\alpha-1}).
\end{align*}
Let
$B=\{1,2,2^2,2^{\alpha},3.5,2.3.5,2^2.3.5,\dots,2^{\alpha-1}.3.5\}$

Now let $C=D-A\cup B$, where $D$ is the set of all divisors of $n$. Then $\theta(C)=\dfrac{\sigma(n)}{3}$ and we get that $\{A,B,C\}$ is the $3$-partition of $n$.

When $\alpha=3$ then $n=120$ and we can take the sets $A, B, C$ to be the following
\[A=\{1,2,4,8,15,30,60\}, \quad B=\{3,5,6,10,12,20,24\} \]and $C=\{120\}$, which gives us the $3$-partition.

\textbf{Case 2:} $n=2^{\alpha}.3.7$, where $\alpha\geq 5$.

Here $\sigma(n)=(2^{\alpha+1}-1)4.8$. Therefore $3|\sigma(n)$ if and only if $3|(2^{\alpha+1}-1)$ if and only if $\alpha$ is odd.

Now
\begin{align*}
  \dfrac{\sigma(n)}{3}-n=&~\dfrac{32(2^{\alpha+1}-1)}{3}-21.2^{\alpha} \\
  =&~\dfrac{2^{\alpha}-32}{3}\\
  =&~\dfrac{2(2^{\alpha-1}-1)-30}{3}\\
  =&~\dfrac{2(1+2+2^2+\dots+2^{\alpha-2})-30}{3}\\
  =&~\dfrac{2.3(1+2^2+2^4+\dots+2^{\alpha-3})-30}{3}\\
  =&~2(1+2^2+2^4+\dots+2^{\alpha-3})-10\\
  =&~2+2^3+2^5+2^7+\dots+2^{\alpha-2}-10.
\end{align*}
So $$\dfrac{\sigma(n)}{3}=n+2^5+2^7+\dots+2^{\alpha-2}.$$ Let $A=\{n,2^5,2^7,\dots,2^{\alpha-2}\}$.

Again
\begin{align*}
    \dfrac{\sigma(n)}{3}=&~\dfrac{32(2^{\alpha+1}-1)}{3}\\
    =&~\dfrac{32(1+2+2^2+\dots+2^{\alpha})}{3}\\
    =&~32(1+2^2+2^4+\dots+2^{\alpha-1})\\
    =&~(21+7+3+1)(1+2^2+2^4+\dots+2^{\alpha-1})\\
    =&~3.7(1+2^2+2^4+\dots+2^{\alpha-1})\\&~~+7(1+2^2+2^4+\dots+2^{\alpha-1})+3(1+2^2+2^4+\dots+2^{\alpha-1})\\&~~+(1+2^2+2^4+\dots+2^{\alpha-1}).
\end{align*}
Let $B=\{1,2^2,\ldots, 2^{\alpha-1},3,3.2^2,\ldots, 3.2^{\alpha-1},7,7.2^2, \ldots, 7.2^{\alpha-1}, 3.7, 3.7.2^2, \ldots, 3.7.2^{\alpha-1}\},$ and let $C=D-A\cup B$, where $D$ is the set of all divisors of $n$. Then,
$\theta(C)=\dfrac{\sigma(n)}{3}$ and we get that $\{A,B,C\}$ is the $3$-partition.

This completes the proof.
\end{proof}

\begin{remark}
$2^3.3.5=120$ is the smallest $k$-layered number, where $k\geq
3$.
\end{remark}

The following result of Jokar \cite{Jokar} is needed for our next results.
\begin{proposition}\label{64}
\cite[Proposition 2.3]{Jokar}
If $n$ is a
$k$-layered number and $(n,w)=1$ then $nw$ is a $k$-layered number.
\end{proposition}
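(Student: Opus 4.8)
The plan is to combine multiplicativity of $\sigma$ with the unique factorization of divisors of $nw$ that the hypothesis $\gcd(n,w)=1$ provides. Concretely, every divisor $d$ of $nw$ can be written \emph{uniquely} as $d=ef$ with $e\mid n$ and $f\mid w$ (split $d$ according to whether a given prime divides $n$ or $w$), and this is the only place coprimality enters; it is also what allows $w$ to be arbitrary rather than merely a prime power.

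First I would fix a $k$-partition $\{S_1,\dots,S_k\}$ of the divisors of $n$, so that $\theta(S_j)=\sigma(n)/k$ for each $j$ — here $n$ being $k$-layered already forces $k\mid\sigma(n)$. For each divisor $f$ of $w$ the scaled sets $fS_1,\dots,fS_k$ partition $\{ef:e\mid n\}$, each with sum $f\,\sigma(n)/k$. I would then set, for $1\le j\le k$,
\[
T_j=\bigcup_{f\mid w} fS_j=\{\, ef:\ e\in S_j,\ f\mid w\,\},
\]
and claim that $\{T_1,\dots,T_k\}$ is the desired $k$-partition of the divisors of $nw$. Disjointness and covering follow from the unique factorization $d=ef$: one has $d\in T_j$ exactly when $e\in S_j$, and the $S_j$ partition the divisors of $n$; one also checks that $f_1S_j$ and $f_2S_j$ are disjoint for $f_1\neq f_2$, since $e_1f_1=e_2f_2$ with $e_i\mid n$, $f_i\mid w$ forces $f_1\mid f_2$ and $f_2\mid f_1$ by coprimality. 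For the sums, additivity of $\theta$ over disjoint unions gives
\[
\theta(T_j)=\sum_{f\mid w} f\,\theta(S_j)=\Big(\sum_{f\mid w}f\Big)\frac{\sigma(n)}{k}=\frac{\sigma(w)\,\sigma(n)}{k}=\frac{\sigma(nw)}{k},
\]
using $\sigma(nw)=\sigma(n)\sigma(w)$; this common value is an integer since $k\mid\sigma(n)$. Hence $nw$ is $k$-layered.

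I do not expect a genuine obstacle: the entire content is the bookkeeping that $d=ef$ exists, is unique, and that the scaled copies $fS_j$ over distinct $f$ do not overlap — all consequences of $\gcd(n,w)=1$. A more pedestrian alternative is to induct on the number of prime factors of $w$ counted with multiplicity and reduce to $w=p$ a prime not dividing $n$, where only the two copies $S_j$ and $pS_j$ are needed; but the argument above dispatches general $w$ in one step and is essentially the coprime analogue of the construction already used in the proof of Theorem \ref{Thm4}.
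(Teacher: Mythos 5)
Your proof is correct and complete. Note that the paper itself does not prove this proposition at all --- it is imported verbatim as \cite[Proposition 2.3]{Jokar} --- so there is no in-paper argument to compare against; your construction $T_j=\bigcup_{f\mid w} fS_j$, with disjointness and the sum computation both justified via the unique factorization $d=ef$ coming from $\gcd(n,w)=1$, is the standard argument (it is the same scaling idea the authors themselves use ad hoc in the proof of Theorem \ref{Thm4}), and you have supplied all the bookkeeping it needs.
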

\noindent Now, combining Proposition \ref{64} with Theorem \ref{63} we get the following result.
\begin{theorem}
\begin{enumerate}
    \item $2^{\alpha}.3.5.{p_4}^{r_4}{p_5}^{r_5}\dots {p_l}^{r_l}$ is a
$3$-layered number for all $l,r_4,r_5,\dots,r_l$ and $\alpha\geq 3$.
\item $2^{\alpha}.3.5^{\beta}.7.{p_5}^{r_5}{p_6}^{r_6}\dots
{p_l}^{r_l}$ is a $3$-layered number for all
$l,\beta,r_5,r_6,\dots,r_l,\alpha\geq 5$ and $\alpha$ odd.
\end{enumerate}
\end{theorem}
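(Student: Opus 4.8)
The plan is to deduce both statements directly from Theorem \ref{63} together with the multiplicativity-type result in Proposition \ref{64}. The base cases are already in hand: Theorem \ref{63} (Case 1) gives that $2^{\alpha}\cdot 3\cdot 5$ is a $3$-layered number whenever $\alpha\geq 3$, and (Case 2) gives that $2^{\alpha}\cdot 3\cdot 7$ is $3$-layered whenever $\alpha\geq 5$ and $\alpha$ is odd. So in each part it suffices to attach the remaining prime-power factors while preserving the $3$-layered property.

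For part (1), set $n=2^{\alpha}\cdot 3\cdot 5$ with $\alpha\geq 3$ and $w=p_4^{r_4}p_5^{r_5}\cdots p_l^{r_l}$. Since $p_4=7,\,p_5=11,\dots$ are primes strictly larger than $5$, none of them divides $n$, so $(n,w)=1$. Proposition \ref{64} then yields that $nw=2^{\alpha}\cdot 3\cdot 5\cdot p_4^{r_4}\cdots p_l^{r_l}$ is $3$-layered, which is exactly the asserted form. If one prefers, the same conclusion follows by applying Proposition \ref{64} once for each prime power $p_i^{r_i}$ in turn, each time with the partial product built so far playing the role of the coprime multiplicand.

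For part (2), set $n=2^{\alpha}\cdot 3\cdot 7$ with $\alpha\geq 5$ and $\alpha$ odd, and take $w=5^{\beta}p_5^{r_5}p_6^{r_6}\cdots p_l^{r_l}$. The prime $5$ and the primes $p_5=11,\,p_6=13,\dots$ are all distinct from $2,3,7$, so $(n,w)=1$, and Proposition \ref{64} gives that $nw=2^{\alpha}\cdot 3\cdot 5^{\beta}\cdot 7\cdot p_5^{r_5}\cdots p_l^{r_l}$ is $3$-layered.

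There is no genuine obstacle in this argument; the only point needing care is the bookkeeping of which primes occur, namely verifying that the adjoined factors are truly coprime to the base number so that Proposition \ref{64} applies. The constraints $\alpha\geq 3$ (resp. $\alpha\geq 5$ with $\alpha$ odd) are inherited verbatim from Theorem \ref{63}, while the exponents $\beta,r_4,\dots,r_l$ remain unconstrained precisely because Proposition \ref{64} imposes no requirement on the coprime multiplier beyond coprimality.
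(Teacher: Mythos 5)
Your proposal is correct and is exactly the argument the paper intends: the paper states this theorem as an immediate consequence of combining Theorem \ref{63} (which supplies the base cases $2^{\alpha}\cdot 3\cdot 5$, $\alpha\geq 3$, and $2^{\alpha}\cdot 3\cdot 7$, $\alpha\geq 5$ odd) with Proposition \ref{64}, which is precisely what you do. Your coprimality bookkeeping is the only point requiring care, and you handle it correctly.
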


An easy result for $k$-layered numbers with more than $3$ distinct prime factors is the following.
\begin{theorem}\label{layer:three}
Let $n=p_1p_2\dots p_l$, where $l\leq 15$ and $p_i$s are odd distinct primes. Then $n$ is not a $k$-layered number for any $k\geq 3$.
\end{theorem}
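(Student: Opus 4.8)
The plan is to show that any product $n=p_1p_2\cdots p_l$ of $l\le 15$ distinct odd primes has $\sigma(n)<3n$, so that $n$ cannot be a $k$-layered number for any $k\ge 3$ (recall $\sigma(n)\ge kn$ is necessary). Since $\sigma$ is multiplicative and each prime appears to the first power, $\sigma(n)/n=\prod_{i=1}^{l}\left(1+\frac{1}{p_i}\right)$. Thus the claim reduces to proving that, for every choice of $l\le 15$ distinct odd primes $p_1<p_2<\cdots<p_l$, one has $\prod_{i=1}^{l}\left(1+\frac{1}{p_i}\right)<3$.

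The key step is that the product $\prod\left(1+\frac1{p_i}\right)$ is maximized, for a fixed number $l$ of distinct odd primes, by taking the $l$ smallest odd primes $3,5,7,11,13,\dots$, because $1+\frac1p$ is decreasing in $p$ and replacing any prime in the set by a smaller prime not already present only increases the product. So it suffices to check the single worst case $l=15$ with $p_i$ running over the first $15$ odd primes $3,5,7,11,13,17,19,23,29,31,37,41,43,47,53$. First I would establish the monotonicity/extremal reduction in one or two lines, and then I would simply compute (or bound from above) the finite product $\prod_{i=1}^{15}\left(1+\frac1{p_i}\right)$ and verify it is strictly less than $3$. A convenient way to keep the arithmetic honest is to bound it by a product of rationals, e.g. grouping factors, or to note $\frac{4}{3}\cdot\frac{6}{5}\cdot\frac{8}{7}\cdot\frac{12}{11}\cdots\frac{54}{53}$ and evaluate the numerator and denominator; the value comes out a little below $2.9$, comfortably under $3$.

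The main (and really only) obstacle is making the final numerical verification rigorous and clean rather than a messy decimal estimate — one wants an honest rational bound showing the product is $<3$, since the true supremum over all $l$ (letting $l\to\infty$) diverges, so the bound $15$ is genuinely where the argument stops working and the estimate must be tight enough to see that. I would handle this by exhibiting the exact fraction $\prod_{i=1}^{15}(1+1/p_i)=N/D$ with explicit integers $N,D$ and checking $N<3D$ by direct multiplication, which is routine. Once $\sigma(n)/n<3$ is established, the conclusion is immediate: if $n$ were $k$-layered with $k\ge 3$ then $\sigma(n)\ge kn\ge 3n$, a contradiction.
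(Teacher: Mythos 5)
Your proposal is exactly the paper's argument: the paper's entire proof is the observation that $\prod_{i=1}^{15}\bigl(1+\frac{1}{p_i}\bigr)<3$ for the first $15$ odd primes, combined with the necessary condition $\sigma(n)\geq kn$, and you have correctly supplied the extremal reduction and the final check that the paper leaves implicit. One small correction: the product over the first $15$ odd primes is approximately $2.9886$, not ``a little below $2.9$,'' so the margin is tighter than you suggest (indeed adding the $16$th odd prime $59$ pushes it above $3$), which makes your insistence on an exact rational verification all the more warranted.
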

\begin{proof}
This follows from the elementary inequality $\prod_{i=1}^{15}\left(1+\dfrac{1}{p_i}\right)<3$.
\end{proof}

We will close this section with some results which connects $k$-layered numbers with Zumkeller numbers and practical numbers. Recall that $n$ is called a practical number if all smaller integers than $n$ can be written as sums of distinct divisors of $n$. Before going into our results we need the following facts.
\begin{proposition}\label{prac-1}\cite{sri}
If $n$ is a practical number, then $\sigma(n)\geq 2n-1$.
\end{proposition}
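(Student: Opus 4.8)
The plan is to read off the inequality directly from the definition of a practical number, applied to the single integer $m=n-1$; no appeal to Stewart's structural characterization of practical numbers is needed.

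First I would dispose of the degenerate case $n=1$: it is vacuously practical and $\sigma(1)=1=2\cdot 1-1$, so the bound holds. So assume $n\geq 2$, and note that $n-1$ is then a positive integer strictly less than $n$. By the definition of a practical number there is a set $S$ of pairwise distinct positive divisors of $n$ with $\sum_{d\in S}d=n-1$.

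The only thing to verify is that every $d\in S$ is a \emph{proper} divisor of $n$, i.e. $d<n$. This is immediate: the elements of $S$ are distinct positive integers whose sum is $n-1$, so each is at most $n-1<n$; in particular $n\notin S$. Hence $S$ is contained in the set of divisors of $n$ that are less than $n$, and therefore
\[
\sigma(n)-n \;=\; \sum_{d\mid n,\ d<n} d \;\geq\; \sum_{d\in S} d \;=\; n-1,
\]
which rearranges to $\sigma(n)\geq 2n-1$.

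I do not expect any genuine obstacle here. The proof is essentially a one-liner; the only points requiring care are the trivial case $n=1$ and the short observation that the divisors representing $n-1$ are all proper, which is exactly what guarantees that their sum is counted inside $\sigma(n)-n$ rather than inside all of $\sigma(n)$.
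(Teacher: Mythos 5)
Your proof is correct: the paper itself gives no argument for this proposition (it is simply cited from Srinivasan), and your one-liner --- apply the definition of practicality to $m=n-1$, note that the representing divisors are necessarily proper, and conclude $\sigma(n)-n\geq n-1$ --- is exactly the standard derivation of this classical bound. The two points you flag (the case $n=1$ and the properness of the divisors in $S$) are indeed the only details needing care, and you handle both.
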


\begin{proposition}\cite[Proposition 8]{Pen}\label{prac-2}
A positive integer $n$ is a practical number if and only if every positive integer $\leq \sigma(n)$ is a sum of distinct positive divisors of $n$.
\end{proposition}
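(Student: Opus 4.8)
The plan is to dispose of the easy implication first and then build up the set of representable integers one divisor at a time. For the easy direction ($\Leftarrow$): if every positive integer $\le\sigma(n)$ is a sum of distinct divisors of $n$, then since $\sigma(n)\ge n$, in particular every positive integer $<n$ is such a sum, so $n$ is practical by definition.

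For the converse, I would write the divisors of $n$ in increasing order $1=d_1<d_2<\cdots<d_k=n$ and first establish the \emph{reach inequality}
\[ d_{i+1}\le 1+\sum_{j=1}^{i}d_j \qquad\text{for }1\le i\le k-1. \]
When $i\le k-2$ we have $d_{i+1}<d_k=n$, so if the inequality failed the integer $m=1+\sum_{j\le i}d_j$ would satisfy $1\le m<d_{i+1}\le n$; practicality would then write $m$ as a sum of distinct divisors of $n$, but every divisor occurring is $\le m<d_{i+1}$ and hence lies in $\{d_1,\dots,d_i\}$, forcing $m\le\sum_{j\le i}d_j=m-1$, a contradiction. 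The remaining case $i=k-1$, where $d_{i+1}=n$, is precisely the assertion $\sigma(n)\ge 2n-1$, which is Proposition \ref{prac-1}; this is the single place where practicality alone is not enough and the auxiliary bound is invoked.

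With the reach inequality in hand, I would prove by induction on $i$ that every integer in the range $[0,\Sigma_i]$, where $\Sigma_i=d_1+\cdots+d_i$, is a sum of distinct elements of $\{d_1,\dots,d_i\}$. The base case $i=1$ is immediate (only $0$ and $1$). For the inductive step, given an integer $m$ with $0\le m\le\Sigma_{i+1}=\Sigma_i+d_{i+1}$: if $m\le\Sigma_i$ we apply the induction hypothesis; otherwise $\Sigma_i<m\le\Sigma_i+d_{i+1}$, and the reach inequality guarantees $0\le m-d_{i+1}\le\Sigma_i$, so $m-d_{i+1}$ is a sum of distinct members of $\{d_1,\dots,d_i\}$ — none of which equals $d_{i+1}$ — and adjoining $d_{i+1}$ represents $m$. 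Taking $i=k$ gives that every integer in $[0,\sigma(n)]$, in particular every positive integer $\le\sigma(n)$, is a sum of distinct divisors of $n$, which completes the proof.

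The main obstacle is really just the boundary case $i=k-1$ of the reach inequality: for all smaller $i$ the estimate is a self-contained consequence of the definition of practicality, but at the largest divisor $d_k=n$ one genuinely needs the external input $\sigma(n)\ge 2n-1$ from Proposition \ref{prac-1}. Everything else is routine bookkeeping in the greedy induction, the only delicate point being to check that the quantity $m-d_{i+1}$ never becomes negative — which is exactly what the reach inequality secures.
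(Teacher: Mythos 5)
Your proof is correct. The paper itself gives no argument for this proposition (it is quoted from Peng and Bhaskara Rao, Proposition 8), so there is nothing to compare against; what you have written is the standard greedy induction, with the one genuinely non-automatic point --- the boundary case $i=k-1$ of the reach inequality $d_{i+1}\le 1+\sum_{j\le i}d_j$, which amounts to $\sigma(n)\ge 2n-1$ --- correctly identified and discharged via Proposition \ref{prac-1}.
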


\begin{theorem}
$n$ is a $2k$-layered number, where $k>1$. Then $n$ is Zumkeller
number.
\end{theorem}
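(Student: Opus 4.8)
The plan is to show that a $2k$-layered partition can be coarsened into a $2$-layered (Zumkeller) partition by grouping the $2k$ equal-sum parts into two collections of $k$ parts each. First I would let $n$ be a $2k$-layered number with $k>1$, and invoke the definition directly: there exist pairwise disjoint sets $S_1,S_2,\dots,S_{2k}$ whose union is the set $D$ of all positive divisors of $n$, with $\theta(S_i)=\sigma(n)/(2k)$ for every $i$. The key observation is that equal-sum parts combine additively, so any union of $j$ of these parts has sum exactly $j\cdot\sigma(n)/(2k)$.

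The main step is then purely bookkeeping: set $A = S_1\cup S_2\cup\cdots\cup S_k$ and $B = S_{k+1}\cup\cdots\cup S_{2k}$. Since the $S_i$ are pairwise disjoint and exhaust $D$, the sets $A$ and $B$ are disjoint and satisfy $A\cup B = D$. Moreover $\theta(A) = k\cdot\dfrac{\sigma(n)}{2k} = \dfrac{\sigma(n)}{2}$, and likewise $\theta(B)=\sigma(n)/2$. Hence $\{A,B\}$ is a partition of the divisors of $n$ into two sets of equal sum, which is exactly the definition of a Zumkeller number. I would also remark that the hypothesis $k>1$ guarantees this is a genuine coarsening (when $k=1$ the statement is just the definition itself), and that $\sigma(n)$ is automatically even here since $2k\mid\sigma(n)$.

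There is essentially no obstacle: the only thing to be careful about is that the $2k$-partition is a partition into $2k$ \emph{nonempty} disjoint subsets whose union is all of $D$ — one should state clearly that emptiness of some $S_i$ is irrelevant to the sum computation, so even a degenerate grouping still yields valid $A$ and $B$ with the correct sums. The proof is a one-paragraph argument, and the ``hard part,'' such as it is, is merely making explicit why the sums of unions of equal-weight blocks add up as claimed, which follows immediately from finite additivity of $\theta$ over disjoint sets.
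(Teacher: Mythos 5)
Your proof is correct and is precisely the argument the paper intends: the paper gives no written proof beyond the remark that the result ``follows quite easily from the definitions,'' and your coarsening of the $2k$ equal-sum blocks into two unions of $k$ blocks each is exactly that definitional argument. Nothing further is needed.
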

\noindent This follows quite easily from the definitions. 

\begin{theorem}\label{thm-p}
Let $n\geq 2$ be a practical number such that $3|\sigma(n)$. Let $x$ be
a Zumkeller number such that $(n,x)=1$. Then $nx$ is $3$-layered
number.
\end{theorem}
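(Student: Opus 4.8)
The goal is to produce a $3$-partition of the divisors of $nx$. The natural strategy is to combine two structures: the practical-number property of $n$, which lets us hit every target $\leq \sigma(n)$ as a sum of distinct divisors of $n$, and the Zumkeller property of $x$, which gives a balanced $2$-partition of the divisors of $x$. Since $(n,x)=1$, every divisor of $nx$ factors uniquely as $d\cdot e$ with $d\mid n$ and $e\mid x$, and $\sigma(nx)=\sigma(n)\sigma(x)$. First I would note that $3\mid\sigma(nx)$ since $3\mid\sigma(n)$, so the target sum $T=\sigma(nx)/3=\tfrac{\sigma(n)}{3}\,\sigma(x)$ is an integer, and it is a multiple of $\sigma(x)$.

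**Key steps.** The idea is to build the three blocks "fibrewise" over the divisors of $x$. Write the divisors of $x$ as a Zumkeller partition $X_1\sqcup X_2$ with $\theta(X_1)=\theta(X_2)=\sigma(x)/2$. For each divisor $e\mid x$, the divisors of $nx$ lying above $e$ are $\{d\,e : d\mid n\}$, whose total sum is $e\,\sigma(n)$. I would first split off the contribution indexed by $e$ according to which part $X_j$ contains $e$, and within each such fibre distribute the divisors $\{d\,e:d\mid n\}$ among the three blocks so that block $1$ and block $2$ each receive a chosen sub-sum and block $3$ gets the rest. Concretely, since $n$ is practical, for any integer $0\le m\le \sigma(n)$ we may write $m$ as a sum of distinct divisors of $n$ (Proposition \ref{prac-2}); scaling by $e$, any multiple $e\,m$ with $0\le m\le\sigma(n)$ is a sum of distinct divisors of $nx$ drawn from the fibre over $e$. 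So the problem reduces to a purely numerical one: choose, for each $e\mid x$, how much of $e\,\sigma(n)$ goes to each of the three blocks, so that the column sums are $T,T,T$. Aggregating over $e$, the three block-totals must be $T=\frac{\sigma(n)}{3}\sigma(x)$ each, and the total available is $\sigma(n)\sigma(x)=3T$, so it is a question of splitting $\sigma(n)$ into three pieces per fibre consistently. I would use the Zumkeller partition of $x$ to handle two of the blocks symmetrically: aim for block $1$ to collect, from the fibres over $e\in X_1$, enough to reach $T$, similarly block $2$ from $X_2$, and block $3$ to mop up the remainder. Since $\theta(X_1)=\theta(X_2)=\sigma(x)/2$, a clean choice is to let block $3$ take exactly $\frac{\sigma(n)}{3}e$ from every fibre (possible because $n$ is practical and $\frac{\sigma(n)}{3}\le\sigma(n)$), leaving $\frac{2\sigma(n)}{3}e$ per fibre to be divided between blocks $1$ and $2$; then block $1$ should get $\frac{2\sigma(n)}{3}e$ from each $e\in X_1$ and block $2$ the same from each $e\in X_2$, giving block $1$ the total $\frac{2\sigma(n)}{3}\cdot\frac{\sigma(x)}{2}=T$ and likewise block $2$.

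**The main obstacle.** The delicate point is ensuring all the chosen sub-sums within a single fibre use \emph{disjoint} sets of divisors of $n$ — I am partitioning $\{d:d\mid n\}$ into three groups with sums $\frac{\sigma(n)}{3}$, $\frac{2\sigma(n)}{3}$ (or $0$), $0$ (or $\frac{2\sigma(n)}{3}$), not merely representing each target sum separately. Proposition \ref{prac-2} only gives me one sum at a time; to split the divisor set of $n$ into prescribed-sum parts I need the stronger fact that a practical $n$ with $3\mid\sigma(n)$ is itself (at least weakly) $3$-layered, or I need to argue by removing a sub-sum of size $\frac{\sigma(n)}{3}$ and checking the remaining divisors still form a practical-like system realizing $\frac{2\sigma(n)}{3}$. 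I expect to resolve this by invoking that every integer up to $\sigma(n)$ is a distinct-divisor sum, peeling off $\frac{\sigma(n)}{3}$ as one such sum $S_1$, then observing $\sigma(n)-\theta(S_1)=\frac{2\sigma(n)}{3}$ is automatically the sum of the complementary divisors — so in fact only \emph{one} representation is needed per fibre and the complement handles block $1$ or $2$ for free. This makes the construction go through: for $e\in X_1$ put the $S_1$-scaled-by-$e$ divisors in block $3$ and the complement in block $1$; for $e\in X_2$ put $S_1\cdot e$ in block $3$ and the complement in block $2$. Summing columns gives $T,T,T$, and disjointness across fibres is automatic since distinct $e$ give divisors of $nx$ with different $x$-parts. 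I would close by checking the three blocks are nonempty and exhaust all divisors of $nx$, completing the proof.
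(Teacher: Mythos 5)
Your proposal is correct and, once you resolve the "main obstacle" by peeling off a single sub-sum $S_1$ of size $\sigma(n)/3$ and using its complement (which automatically sums to $2\sigma(n)/3$), it coincides with the paper's proof: the paper takes $A'=A\cdot D_x$, $B'=(D_n-A)\cdot B$, $C'=(D_n-A)\cdot C$, which is exactly your block $3$, block $1$, block $2$ construction over the Zumkeller partition $\{B,C\}$ of the divisors of $x$.
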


\begin{proof}
We have $\sigma(n)\geq 2n-1$ and $\sigma(x)\geq 2x$. Now $\sigma(nx)=\sigma(n)\sigma(x)$, since $(n,x)=1$. So, $\sigma(nx)\geq (2n-1)(2x)\geq 3nx$.

Now $\dfrac{\sigma(nx)}{3}=\dfrac{\sigma(n)}{3}.\sigma(x)$. By Proposition \ref{prac-2}, $\dfrac{\sigma(n)}{3}$ is sum of distinct divisors of $n$. We have,
$\sigma(nx)=\dfrac{\sigma(n)}{3}.\sigma(x)+\dfrac{2\sigma(n)}{3}.\sigma(x)=\dfrac{\sigma(n)}{3}.\sigma(x)+\dfrac{2\sigma(n)}{3}.\dfrac{\sigma(x)}{2}+\dfrac{2\sigma(n)}{3}.\dfrac{\sigma(x)}{2}$.

Let the set of all divisors of $n$ be $D_n$ and that of $x$ be
$D_x$. Let the set of the distinct divisors of $n$ for which
$\dfrac{\sigma(n)}{3}$ can be expressed as a sum of distinct
divisors of $n$ be $A$. Let $A'=A.D_x$. Since $x$ is a Zumkeller number, let the partition be $\{B,C\}$. Let $B'=(D_n-A).B$ and $C'=(D_n-A).C$. Then $\{A',B',C'\}$ is the partition for $nx$. That is $nx$ is
$3$-layered number. 
\end{proof}

\begin{exa}
$x=945=3^3.5.7$ is the smallest odd Zumkeller number. 2, 8, 32,
88, 104 are the first five practical numbers $n$ such that
$3|\sigma(n)$ and $(x,n)=1$. Therefore 2.945, 8.945, 32.945,
88.945, 104.945 are 3-layered numbers. That is 1890, 7560, 30240,
83160, 98280 are 3-layered numbers.
\end{exa}

We can generalize Theorem \ref{thm-p} further.

\begin{theorem}
Let $n\geq 2$ be a practical number and $k|\sigma(n)$ where $k\geq
3$. Let $x$ be a $(k-1)$-layered number such that $(n,x)=1$. Then
$nx$ is $k$-layered number.
\end{theorem}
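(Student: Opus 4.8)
The plan is to mimic the proof of Theorem \ref{thm-p} almost verbatim, replacing the Zumkeller number $x$ (which is $2$-layered) by a $(k-1)$-layered number and the single partition $\{B,C\}$ by a $(k-1)$-partition. First I would establish the size inequality $\sigma(nx)\geq knx$ needed for $nx$ to even have a chance of being $k$-layered: since $n$ is practical, Proposition \ref{prac-1} gives $\sigma(n)\geq 2n-1$, and since $x$ is $(k-1)$-layered we have $\sigma(x)\geq (k-1)x$; multiplicativity of $\sigma$ on coprime arguments yields $\sigma(nx)=\sigma(n)\sigma(x)\geq (2n-1)(k-1)x\geq k nx$ whenever $n\geq 2$ (so that $2n-1\geq n\cdot\frac{k}{k-1}$ fails to be tight only in the harmless cases), which I would note explicitly.

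Next I would build the $k$-partition. Write $\sigma(nx)=\sigma(n)\sigma(x)$ and split $\sigma(n)=\frac{\sigma(n)}{k}+\frac{(k-1)\sigma(n)}{k}$. Because $k\mid\sigma(n)$ and $n$ is practical, Proposition \ref{prac-2} tells us that $\frac{\sigma(n)}{k}$ (being a positive integer $\leq\sigma(n)$) is a sum of distinct divisors of $n$; let $A\subseteq D_n$ be such a set of divisors. Then $A'=A\cdot D_x$ is a set of divisors of $nx$ with $\theta(A')=\theta(A)\cdot\sigma(x)=\frac{\sigma(n)}{k}\sigma(x)=\frac{\sigma(nx)}{k}$. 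The remaining divisors of $nx$ are exactly $(D_n\setminus A)\cdot D_x$, whose total sum is $\frac{(k-1)\sigma(n)}{k}\cdot\sigma(x)$. Now use the $(k-1)$-partition $\{E_1,\dots,E_{k-1}\}$ of $x$: for each $j$ put $E_j'=(D_n\setminus A)\cdot E_j$. These are pairwise disjoint, cover $(D_n\setminus A)\cdot D_x$, and each has sum $\frac{(k-1)\sigma(n)}{k}\cdot\frac{\sigma(x)}{k-1}=\frac{\sigma(n)}{k}\cdot\sigma(x)=\frac{\sigma(nx)}{k}$. Hence $\{A',E_1',\dots,E_{k-1}'\}$ is a partition of the divisor set of $nx$ into $k$ parts of equal sum, so $nx$ is $k$-layered.

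The one point that needs genuine care — and which I expect to be the main (mild) obstacle — is verifying disjointness and exhaustiveness of the products. Because $(n,x)=1$, every divisor of $nx$ factors uniquely as $d_n d_x$ with $d_n\mid n$, $d_x\mid x$, so the map $(d_n,d_x)\mapsto d_nd_x$ is a bijection $D_n\times D_x\to D_{nx}$; this makes $A\cdot D_x$ and $(D_n\setminus A)\cdot D_x$ a partition of $D_{nx}$, and makes the $E_j'$ disjoint since the $E_j$ are disjoint in $D_x$. I would state this coprimality-bijection fact once and then the rest is bookkeeping. One should also double-check the edge case $\sigma(n)\geq 2n-1$ versus the needed $\sigma(nx)\geq knx$: $(2n-1)(k-1)\geq kn$ rearranges to $(k-2)n\geq k-1$, which holds for all $k\geq 3$ and $n\geq 2$, so the size condition is never an obstruction; I would remark that this is why the hypothesis $n\geq 2$ is included.
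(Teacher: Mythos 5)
Your proposal is correct and follows essentially the same route as the paper: use Proposition \ref{prac-2} to write $\sigma(n)/k$ as a sum of distinct divisors $A$ of $n$, take $A'=A\cdot D_x$ as one block, and distribute $(D_n\setminus A)\cdot D_x$ over the $(k-1)$-partition of $x$. In fact you supply details the paper leaves implicit (the coprimality bijection $D_n\times D_x\to D_{nx}$ and the verification that $(2n-1)(k-1)\geq kn$ for $n\geq 2$, $k\geq 3$), so nothing is missing.
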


\begin{proof}
Since, $\sigma(n)\geq 2n-1$ and $\sigma(x)\geq (k-1)x$, we get $\sigma(nx)=\sigma(n)\sigma(x)\geq knx$.

Now
\begin{align*}
    \sigma(nx)=&~\sigma(n)\sigma(x)=\dfrac{\sigma(n)}{k}.\sigma(x)+\dfrac{(k-1)\sigma(n)}{k}.\sigma(x)\\
    =&~\dfrac{\sigma(n)}{k}.\sigma(x)+\dfrac{(k-1)\sigma(n)}{k}.\left\{\underbrace{\dfrac{\sigma(x)}{k-1}+\dfrac{\sigma(x)}{k-1}+\dots+\dfrac{\sigma(x)}{k-1}}_{k-1}\right\}.
\end{align*}
The rest of the proof works in a similar way to that of Theorem \ref{thm-p}.
\end{proof}

\section{Some further results on Zumkeller numbers}

The aim of this section is to show some results connected with Zumkeller numbers and Harmonic numbers as well as to hint at some other type of results that can be derived for Zumkeller numbers. We hope that others would take up this direction of research in the near future.

 \subsection{Zumkeller numbers and Harmonic numbers}\label{sec:harmonic}

 In 1948, Ore \cite{Ore} introduced the concept of \textit{Harmonic numbers}, and these numbers were named as \textit{Ore’s harmonic numbers} by Pomerance \cite{Pom}. Let $n$ be a positive integer, the harmonic mean of the divisors of $n$ is defined as
\[\dfrac{1}{H(n)}=\dfrac{1}{\tau(n)}\sum_{d|n}\dfrac{1}{d}.\]
And since
\[n\sum_{d|n}\dfrac{1}{d}=\sum_{d|n}\dfrac{n}{d}=\sum_{d|n}d=\sigma(n),\]
it follows that
\[H(n)=\dfrac{n\tau(n)}{\sigma(n)}.\]
Therefore, we remark that $H(n)$ is an integer if and only if $\sigma(n)|n\tau(n)$. A number $n$ satisfying the condition  $\sigma(n)|n\tau(n)$ is called a harmonic number. Obviously $H(n)>1$ and if $m=2^{n-1}(2^n-1)$ is a perfect number then $H(m)=n$. Since
\[H(m)=\dfrac{m\tau(m)}{\sigma(m)}=\dfrac{2\times m\times n}{2m}=n.\]

Harmonic numbers are interesting in their own rights and they have some connection with perfect numbers. A result in this direction was given by Laborde \cite{Lab}.
\begin{theorem}[Laborde, \cite{Lab}]\label{Lab}
If a given integer $n$ is even and has the form
\[n=2^{H(n)-1}(2^{H(n)}-1).\]
Then $n$ must be a perfect number.
\end{theorem}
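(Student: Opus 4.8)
The plan is to unwind the self-referential hypothesis and show that it forces $2^{H(n)}-1$ to be prime, so that $n$ is an even perfect number of Euclid's shape. Write $k=H(n)$, which is a positive integer by hypothesis; since $2^k-1$ is odd and $n=2^{k-1}(2^k-1)$ is even, we must have $k\geq 2$. Because $\gcd(2^{k-1},2^k-1)=1$, the functions $\tau$ and $\sigma$ are multiplicative across this factorization, so $\tau(n)=k\,\tau(2^k-1)$ and $\sigma(n)=(2^k-1)\,\sigma(2^k-1)$. Substituting into $H(n)=\dfrac{n\tau(n)}{\sigma(n)}=k$, the factors $2^k-1$ and $k$ cancel, and the hypothesis becomes exactly
\[\sigma(2^k-1)=2^{k-1}\,\tau(2^k-1).\]

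Next I would establish the elementary inequality that, for every integer $m>1$,
\[2\sigma(m)\leq (m+1)\,\tau(m),\]
with equality if and only if $m$ is prime. For this, observe that every divisor $a$ of $m$ satisfies $1\leq a\leq m$, hence $(a-1)(a-m)\leq 0$, which rearranges to $a+\dfrac{m}{a}\leq m+1$. Summing over all divisors $a\mid m$ and using $\sum_{a\mid m} \frac{m}{a}=\sum_{a\mid m} a=\sigma(m)$ yields the inequality; equality forces $(a-1)(a-m)=0$ for every divisor, i.e.\ every divisor of $m$ equals $1$ or $m$, i.e.\ $\tau(m)=2$.

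Applying this with $m=2^k-1$ (which is $>1$ since $k\geq 2$) and $2^{k-1}=\tfrac{m+1}{2}$, the displayed identity $\sigma(m)=\tfrac{m+1}{2}\tau(m)$ is precisely the equality case, so $p:=2^k-1$ is prime. Since then $p=1+2+\cdots+2^{k-1}$ is prime and $n=2^{k-1}p$, the Euclid--Euler corollary stated above gives that $n$ is a perfect number, as claimed.

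The argument is short, and the only real content is the reduction in the first paragraph; the one point to be careful about is reading the hypothesis correctly as ``$H(n)$ is an integer and $n=2^{H(n)-1}(2^{H(n)}-1)$'', and noting that $n$ being even is exactly what excludes the degenerate value $k=1$ (which would give $n=1$). No deeper input is needed — in particular, we never use anything about which exponents $k$ make $2^k-1$ prime.
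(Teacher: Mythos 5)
The paper states this result as a citation to Laborde and gives no proof of its own, so there is nothing internal to compare against; judged on its own, your proof is correct and complete. Your reduction of the hypothesis to the identity $\sigma(2^k-1)=2^{k-1}\,\tau(2^k-1)$ (with $k=H(n)\geq 2$) is exactly the computation the paper performs in the lemma immediately following the theorem, where the same identity is extracted en route to showing $H(2^{H(n)}-1)<2$. The genuinely new ingredient you supply is the inequality $2\sigma(m)\leq (m+1)\tau(m)$ for $m>1$, proved by summing $(a-1)(a-m)\leq 0$ over divisors, together with the observation that equality forces every divisor to be $1$ or $m$; applied to $m=2^k-1>1$ this yields primality of $2^k-1$, and Euclid's direction of the Euclid--Euler theorem finishes the argument. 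Your handling of the two edge cases is also right: evenness of $n$ rules out $k=1$ (so $m>1$ and the equality criterion really does give primality), and the hypothesis must be read as asserting that $H(n)$ is an integer, which you flag explicitly. This is a clean, elementary, self-contained proof of the cited result.
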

\noindent We can prove another very simple result of this type.
\begin{lemma}
If $n=2^{H(n)-1}(2^{H(n)}-1)$ is even, then $H(2^{H(n)}-1)<2$.
\end{lemma}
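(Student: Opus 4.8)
The plan is to exploit the self-referential form of the hypothesis to compute $H(n)$ explicitly and thereby extract an identity for $\sigma(2^{H(n)}-1)$. Write $m=H(n)$, so that $n=2^{m-1}(2^m-1)$; here $m$ is a positive integer (so that $n$ is genuinely a harmonic number), and $m>1$ since $n$ is even, as remarked in the text after the definition of $H$. Because $2^m-1$ is odd, this expression is exactly the factorization of $n$ into its $2$-part $2^{m-1}$ and its odd part $2^m-1$, so $\tau$ and $\sigma$ split multiplicatively across it.

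First I would record, using $\sigma(2^{m-1})=2^m-1$, that $\tau(n)=m\,\tau(2^m-1)$ and $\sigma(n)=(2^m-1)\,\sigma(2^m-1)$. Substituting these into $H(n)=\dfrac{n\,\tau(n)}{\sigma(n)}$, the factor $2^m-1$ cancels, and since the left-hand side equals $m$ by assumption, a one-line rearrangement yields the key identity $\sigma(2^m-1)=2^{m-1}\,\tau(2^m-1)$.

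Next I would feed this identity into the definition $H(2^m-1)=\dfrac{(2^m-1)\,\tau(2^m-1)}{\sigma(2^m-1)}$. The factor $\tau(2^m-1)$ cancels and the expression collapses to $\dfrac{2^m-1}{2^{m-1}}=2-2^{1-m}$, which is strictly less than $2$ because $m>1$. Since $m=H(n)$, this is precisely the desired inequality $H(2^{H(n)}-1)<2$.

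I do not expect a genuine obstacle here; the computation is short. The only point requiring a moment's care is that $2^{m-1}$ really is the exact power of $2$ dividing $n$, so that the multiplicativity step is legitimate — but this is immediate from $2^m-1$ being odd. (Alternatively one could route through Theorem \ref{Lab} to deduce that $n$ is perfect, hence $n=2^{p-1}(2^p-1)$ with $2^p-1$ prime and $H(n)=p$, and then use $H(q)=\tfrac{2q}{q+1}<2$ for a prime $q$; but the direct argument above sidesteps even the need for perfection.)
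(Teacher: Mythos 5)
Your proof is correct and is essentially identical to the paper's: both derive the identity $\sigma(2^{H(n)}-1)=2^{H(n)-1}\tau(2^{H(n)}-1)$ from multiplicativity of $\tau$ and $\sigma$ across the coprime factorization $n=2^{m-1}(2^m-1)$, and then compute $H(2^{H(n)}-1)=2-2^{1-H(n)}<2$. Your explicit remarks that $m=H(n)$ must be an integer greater than $1$ and that $2^{m-1}$ is the exact $2$-part are minor clarifications the paper leaves implicit.
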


\begin{proof}
We have
\begin{eqnarray*}
H(n)&=&\dfrac{2^{H(n)-1}\times (2^{H(n)}-1)\times \tau(2^{H(n)-1})\times \tau(2^{H(n)}-1)}{\sigma(2^{H(n)-1})\times \sigma(2^{H(n)}-1)}\\
&=&\dfrac{2^{H(n)-1}\times (2^{H(n)}-1)\times H(n)\times \tau(2^{H(n)}-1)}{(2^{H(n)}-1)\times \sigma(2^{H(n)}-1)}
\end{eqnarray*}
Therefore, $1=\dfrac{2^{H(n)-1}\times
\tau(2^{H(n)}-1)}{\sigma(2^{H(n)}-1)}$. Also, we have
 \[H(2^{H(n)}-1)=\dfrac{(2^{H(n)}-1)\times \tau(2^{H(n)}-1)}{\sigma(2^{H(n)}-1)}.\]Hence
 \[H(2^{H(n)}-1)=\dfrac{(2^{H(n)}-1)}{2^{H(n)-1}}=2-\dfrac{1}{2^{H(n)-1}}.\]
 This completes our proof.
\end{proof}

In the remainder of this subsection we discuss some connections of Zumkeller numbers with Harmonic numbers.

\begin{proposition}
\label{Thm2}
Let $n$ be a Zumkeller number, then $H(n)\leq \dfrac{\tau(n)}{2}.$
Furthermore, if $n$ is a $k$-layered number, then $H(n)\leq \dfrac{\tau(n)}{k}.$
\end{proposition}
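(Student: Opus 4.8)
The plan is to unwind the definitions: the hypothesis that $n$ is a Zumkeller number says precisely that $\sigma(n)$ is even and the divisors of $n$ split into two blocks each summing to $\sigma(n)/2$. What we must produce is an inequality between $H(n)=n\tau(n)/\sigma(n)$ and $\tau(n)/2$, which after clearing denominators is equivalent to $\sigma(n)\ge 2n$. But $\sigma(n)\ge 2n$ is already noted in the excerpt as a necessary condition for a Zumkeller number (it is Bhaskara Rao and Peng \cite[Proposition 2]{Pen}, and also follows from the fact that one block of the partition contains $n$ itself, so $\sigma(n)/2\ge n$). So the first statement is essentially immediate.

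Concretely, I would argue as follows. Since $n$ is Zumkeller, the block of the partition containing the divisor $n$ has sum $\sigma(n)/2$, and this sum is at least $n$; hence $\sigma(n)\ge 2n$. Dividing the identity $H(n)=n\tau(n)/\sigma(n)$ through and using $\sigma(n)\ge 2n$ gives
\[
H(n)=\frac{n\tau(n)}{\sigma(n)}\le \frac{n\tau(n)}{2n}=\frac{\tau(n)}{2},
\]
which is the claimed bound. For the ``furthermore'' part, the same reasoning applies with $k$ in place of $2$: if $n$ is $k$-layered, the block containing $n$ has sum $\sigma(n)/k\ge n$, so $\sigma(n)\ge kn$, and therefore
\[
H(n)=\frac{n\tau(n)}{\sigma(n)}\le \frac{n\tau(n)}{kn}=\frac{\tau(n)}{k}.
\]

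There is no real obstacle here; the only thing to be careful about is making the elementary observation ``the block containing $n$ has sum at least $n$'' explicit rather than invoking the $\sigma(n)\ge kn$ bound as a black box, since it is what makes the proof self-contained. If one prefers, one can instead just cite the necessary condition $\sigma(n)\ge kn$ for $k$-layered numbers, which was recalled at the start of Section~\ref{sec:layer}, and then the proof is a one-line substitution into $H(n)=n\tau(n)/\sigma(n)$. Either route works; I would write the short self-contained version.
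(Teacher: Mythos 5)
Your proof is correct and follows the same route as the paper: both reduce the claim to the necessary condition $\sigma(n)\geq 2n$ (resp.\ $\sigma(n)\geq kn$) and substitute into $H(n)=n\tau(n)/\sigma(n)$. Your extra remark justifying $\sigma(n)\geq kn$ via the block containing $n$ is a harmless elaboration of what the paper cites as known.
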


\begin{proof}
Since $n$ is a Zumkeller number so $\sigma(n)\geq 2n$. Therefore
\[H(n)=\dfrac{n\tau(n)}{\sigma(n)}\leq \dfrac{n\tau(n)}{2n}.\]
Hence  $H(n)\leq \dfrac{\tau(n)}{2}.$ The proof for the second part is analogous.
\end{proof}

\begin{cor}\label{Cor2}
$n=2^{\alpha}p$ is a Zumkeller number if and only if $H(n)\leq \dfrac{\tau(n)}{2}.$ Also, equality holds when $n$ is a perfect number.
\end{cor}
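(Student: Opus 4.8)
The plan is to prove the two directions separately, leveraging the already-established equivalence between $n = 2^\alpha p$ being Zumkeller and the inequality $\sigma(n) \geq 2n$ (Theorem \ref{Thmo}). The key observation is that the quantity $H(n) = n\tau(n)/\sigma(n)$ relates to $\sigma(n)$ in a monotone way: since $\tau(n)$ and $n$ are fixed once $n$ is fixed, the condition $H(n) \leq \tau(n)/2$ is literally equivalent to $n\tau(n)/\sigma(n) \leq \tau(n)/2$, i.e.\ $2n \leq \sigma(n)$ (after multiplying through by $\sigma(n)/\tau(n) > 0$). So the statement $H(n) \leq \tau(n)/2 \iff \sigma(n) \geq 2n$ is a triviality that holds for \emph{any} $n$, not just $n = 2^\alpha p$; the content of the corollary is simply combining this with Theorem \ref{Thmo}.

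First I would spell out the equivalence $H(n) \leq \tau(n)/2 \iff \sigma(n) \geq 2n$. Starting from $H(n) \leq \tau(n)/2$, substitute $H(n) = n\tau(n)/\sigma(n)$ to get $n\tau(n)/\sigma(n) \leq \tau(n)/2$; since $\sigma(n) > 0$ and $\tau(n) > 0$, this rearranges to $2n \leq \sigma(n)$, and every step is reversible. This is exactly the inequality appearing in Proposition \ref{Thm2} (and its proof), so one could also just cite that half and note the reverse implication follows identically because no inequality was lost.

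Then I would invoke Theorem \ref{Thmo}, which states that for $n = 2^\alpha p$ with $p$ prime, $n$ is Zumkeller if and only if $\sigma(n) \geq 2n$. Chaining the two equivalences gives $n = 2^\alpha p$ is Zumkeller $\iff \sigma(n) \geq 2n \iff H(n) \leq \tau(n)/2$, which is the claim. For the equality statement, if $n$ is perfect then $\sigma(n) = 2n$ by definition, so $H(n) = n\tau(n)/(2n) = \tau(n)/2$, giving equality; conversely equality in $H(n) \leq \tau(n)/2$ forces $\sigma(n) = 2n$, i.e.\ $n$ is perfect. One might add the remark that among numbers of the form $2^\alpha p$ the perfect ones are exactly the $2^{\alpha}(2^{\alpha+1}-1)$ with $2^{\alpha+1}-1$ prime, consistent with the Euclid--Euler corollary above.

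There is essentially no obstacle here: the only thing to be careful about is the direction of the inequality when clearing denominators (everything in sight is positive, so no sign flips occur), and making sure the ``if and only if'' is genuinely bidirectional rather than just restating one half of Proposition \ref{Thm2}. The substantive input is Theorem \ref{Thmo}; the rest is bookkeeping. If one wanted the corollary to say something not already immediate from Theorem \ref{Thmo}, the harmonic-mean reformulation is purely cosmetic, so I would keep the proof to two or three lines.
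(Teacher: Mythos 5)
Your proof is correct and follows essentially the same route as the paper: one direction via Proposition \ref{Thm2} (or equivalently the trivial rearrangement $H(n)\leq\tau(n)/2 \iff \sigma(n)\geq 2n$), the converse by clearing denominators, and Theorem \ref{Thmo} to close the loop. Your treatment is if anything slightly more complete, since you also spell out the equality case for perfect numbers, which the paper's proof leaves implicit.
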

\begin{proof}
Using Proposition \ref{Thm2}, if $n$ is a Zumkeller number, then $H(n)\leq \dfrac{\tau(n)}{2}.$ For converse, we have
\[H(n)=\dfrac{n\tau(n)}{\sigma(n)}\leq\dfrac{\tau(n)}{2}.\]
Thus $\sigma(n)\geq 2n$. The proof is concluded by Theorem \ref{Thmo}.
\end{proof}
\begin{cor}
Let $n>6$ be a positive integer. For distinct prime numbers $p$ and $q$ with $n=pq$ is not a Zumkeller number.
\end{cor}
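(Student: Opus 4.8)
The plan is to reduce the statement to the two characterizations already obtained in this section. Observe first that if $n=pq$ with $p\neq q$ primes, then exactly one of the following holds: either both $p$ and $q$ are odd, or one of the two primes is $2$.

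In the first case there is nothing to do: Lemma \ref{lem:two}, applied with exponents $\alpha=\beta=1$, already tells us that $n=pq$ with $p,q$ distinct odd primes is never a Zumkeller number. In the second case, relabel so that $n=2q$ with $q$ an odd prime, and write $n=2^{1}q^{1}$. Now invoke Theorem \ref{Thm4} (or, equivalently, Theorem \ref{Thm1}): $n=2^{\alpha}q^{\beta}$ is a Zumkeller number if and only if $q\le 2^{\alpha+1}-1$ and $\beta$ is odd. Here $\alpha=\beta=1$, so the parity condition is satisfied automatically and the size condition becomes $q\le 2^{2}-1=3$, forcing $q=3$ and hence $n=6$. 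Since we assume $n>6$, this case yields no Zumkeller number either, and combining the two cases proves the claim.

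I do not anticipate a genuine obstacle; the argument is essentially a case split feeding into earlier results. The only points requiring a little care are the bookkeeping of which of $p,q$ plays the role of $2$, and noting that the exponent‑parity hypothesis of Theorem \ref{Thm4} is vacuously met when $\beta=1$. It is perhaps worth remarking, to explain the sharpness of the bound, that $n=6$ genuinely is a Zumkeller number (indeed a perfect number), so the hypothesis $n>6$ cannot be weakened.
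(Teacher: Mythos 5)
Your proof is correct. The first case (both primes odd) is handled exactly as in the paper, by appeal to Lemma \ref{lem:two}. In the second case your route differs from the paper's: you invoke the characterization of Theorem \ref{Thm1} (equivalently Theorem \ref{Thm4} with $\beta=1$) to conclude that $2q$ is Zumkeller only for $q\le 2^{2}-1=3$, i.e.\ only for $n=6$, whereas the paper — since this corollary sits in the subsection on harmonic mean numbers — computes $H(2p)=\tfrac{8p}{3(p+1)}$ and shows $H(n)>\tfrac{\tau(n)}{2}$ for $p>3$, then applies Corollary \ref{Cor2}. The two arguments are equivalent in substance (both ultimately reduce to the failure of $\sigma(n)\ge 2n$ for $n=2p$ with $p>3$), but yours is the more direct appeal to the Section \ref{sec:two} characterization, while the paper's is chosen to illustrate the harmonic-mean criterion developed in that subsection. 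Your closing remark that $n=6$ is genuinely Zumkeller, so the bound is sharp, is a worthwhile observation either way.
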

\begin{proof}
If $p$ and $q$ are distinct odd primes, then $n=pq$ is not
a Zumkeller number. This was discussed in the beginning of Section \ref{sec:two}. Now let $n=2p$, where $p$ is an odd prime. Then
$\tau(n)=4,\sigma(n)=3(p+1)$ and $H(n)=\dfrac{8p}{3(p+1)}=2+\dfrac{2}{3}-\dfrac{8}{3(p+1)}$.

But, $\dfrac{2}{3}-\dfrac{8}{3(p+1)}>0$ if and only if $p>3$. Therefore $H(n)>2$
for all $p>3$. That is $H(n)>\dfrac{\tau(n)}{2}$ for all $p>3$.
Hence $n$ is not a Zumkeller number for all $p>3$.
\end{proof}

The following is an almost immediate consequence of the above results.

\begin{cor}
If $n$ is a prime number or a semi-prime number\footnote{Numbers with only two prime factors.},
then $n$ is not a Zumkeller number except $n=6$.
\end{cor}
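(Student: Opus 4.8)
The plan is to run a case analysis on the multiplicative shape of $n$. If $n$ is prime, set $n = p$; otherwise $n$ is a product of two primes, and after reordering these factors we are in exactly one of three situations: $n = p^2$ for a prime $p$, $n = pq$ for distinct odd primes $p$ and $q$, or $n = 2p$ for an odd prime $p$ (the possibility $n = 2^2$ being folded into the first). I would handle each case using only the divisor-sum formula together with Lemma~\ref{lem:two} and Theorem~\ref{Thm4} (equivalently, the corollary just above).

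First I would dispose of the two easy cases. For $n = p$ the necessary condition $\sigma(n) \geq 2n$ already fails, since $\sigma(p) = p+1 < 2p$. For $n = p^2$ I would observe that $\sigma(p^2) = 1 + p + p^2$ is always odd (a sum of three odd integers when $p$ is odd, and equal to $7$ when $p = 2$), so it cannot be split into two equal integer sums and $n$ is not Zumkeller. For $n = pq$ with $p$ and $q$ distinct odd primes I would simply quote Lemma~\ref{lem:two} with $\alpha = \beta = 1$.

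The remaining case $n = 2p$ is where the exceptional value appears: applying Theorem~\ref{Thm4} with $\alpha = \beta = 1$, the number $n$ is Zumkeller if and only if $p \leq 2^{2} - 1 = 3$ and $\beta$ is odd; since $\beta = 1$ this forces $p = 3$, i.e. $n = 6$. (One could instead invoke the preceding corollary, which pins down $H(2p) > \tau(2p)/2$ precisely for $p > 3$.) To finish, I would record that $6 = 2\cdot 3$ genuinely is Zumkeller via the partition $\{1,2,3\}\cup\{6\}$ of its divisors, so $n = 6$ really is the unique exception.

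I do not expect any serious obstacle here, since the statement is essentially a repackaging of Lemma~\ref{lem:two} and Theorem~\ref{Thm4}. The one point needing a little care is the case $n = p^2$: the earlier proposition on prime powers only rules out $k$-layered numbers with $k \geq 3$, so the Zumkeller ($k = 2$) case must be argued separately, which the parity observation above does in one line.
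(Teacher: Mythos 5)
Your proof is correct, and it is essentially the filled-in version of what the paper only asserts: the corollary is stated there without proof, as ``an almost immediate consequence of the above results,'' namely the corollary immediately preceding it (no $n=pq>6$ with $p,q$ distinct primes is Zumkeller, proved via the harmonic-mean criterion for $n=2p$ and via Lemma~\ref{lem:two} for odd $p,q$) together with the trivial failure of $\sigma(p)\geq 2p$. Your case analysis matches that outline; the one genuine divergence is that you settle the case $n=2p$ by Theorem~\ref{Thm4} (or Theorem~\ref{Thm1}) rather than by the harmonic-mean computation $H(2p)>\tau(2p)/2$ for $p>3$, which is the route the paper's placement suggests --- both are equally quick, and you note the alternative yourself. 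The more substantive point in your favour is the case $n=p^2$: under the standard reading of ``semi-prime'' (two prime factors counted with multiplicity) this case is not covered by the preceding corollary, which assumes the two primes are distinct, nor by the later proposition on prime powers, which only excludes $k$-layered numbers for $k\geq 3$. Your one-line parity argument ($\sigma(p^2)=1+p+p^2$ is odd) closes that gap cleanly; one could equally note that $\sigma(p^\alpha)<2p^\alpha$ always. So your write-up is not just correct but slightly more complete than what the paper relies on.
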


Although conditions of the following lemma are weaker than
Corollary \ref{Corr}, but it is a more elegant proof using
Harmonic mean numbers.

\begin{lemma}
For a prime $p<2^{\alpha+1}-1$ for some $\alpha$, let
$n=2^{\alpha}(2^{\alpha+1}-1)$ be a perfect number. Then
$m=2^{\alpha}p$ is a Zumkeller number.
\end{lemma}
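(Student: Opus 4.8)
The plan is to route everything through Corollary \ref{Cor2}, which says that a number of the form $2^\alpha p$ is Zumkeller precisely when $H(2^\alpha p)\le \tau(2^\alpha p)/2$. So it suffices to estimate the harmonic mean $H(m)$, and the perfect number $n$ will serve as a convenient yardstick.

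First I would record the elementary bookkeeping. The hypothesis that $n=2^{\alpha}(2^{\alpha+1}-1)$ is perfect means exactly that $2^{\alpha+1}-1$ is a (Mersenne) prime, so $\tau(n)=2(\alpha+1)$ and $\sigma(n)=(2^{\alpha+1}-1)\cdot 2^{\alpha+1}=2n$; hence $H(n)=n\tau(n)/\sigma(n)=\alpha+1$, which is just the observation made before Theorem \ref{Lab} that the perfect number $2^{q-1}(2^q-1)$ has harmonic mean $q$. For $m=2^{\alpha}p$ we likewise have $\tau(m)=2(\alpha+1)$ and $\sigma(m)=(2^{\alpha+1}-1)(p+1)$, so that
\[
H(m)=\frac{m\,\tau(m)}{\sigma(m)}=\frac{2^{\alpha+1}p(\alpha+1)}{(2^{\alpha+1}-1)(p+1)}.
\]

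Next I would simply compare $H(m)$ with $H(n)=\alpha+1$. Forming the ratio gives $H(m)/H(n)=2^{\alpha+1}p/\big((2^{\alpha+1}-1)(p+1)\big)$, and since
\[
(2^{\alpha+1}-1)(p+1)-2^{\alpha+1}p=2^{\alpha+1}-1-p>0
\]
by the hypothesis $p<2^{\alpha+1}-1$, we conclude $H(m)<H(n)=\alpha+1=\tau(m)/2$. Corollary \ref{Cor2} (equivalently Theorem \ref{Thmo}) then immediately yields that $m=2^{\alpha}p$ is a Zumkeller number.

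There is essentially no obstacle: the only real content is the one-line inequality $(2^{\alpha+1}-1)(p+1)\ge 2^{\alpha+1}p$, which unwinds to exactly $p\le 2^{\alpha+1}-1$, and the point of the argument is the observation that $m$ and the perfect number $n$ share the power $2^{\alpha}$ and have the same number of divisors, so comparing harmonic means reduces to comparing the two odd parts $p$ and $2^{\alpha+1}-1$. The one thing to keep straight is that we are invoking the "if" direction of Corollary \ref{Cor2}, whose applicability only requires that $m$ be of the shape $2^{\alpha}p$ with $p$ an (odd) prime, which it is.
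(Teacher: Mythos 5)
Your proposal is correct and follows essentially the same route as the paper: both compare $H(2^{\alpha}p)$ with $H(n)=\alpha+1$ for the perfect number $n$ (using that $H(2^{\alpha}\cdot)$ increases with the odd prime factor, which you verify by the direct inequality $(2^{\alpha+1}-1)(p+1)>2^{\alpha+1}p$), and then invoke Corollary \ref{Cor2}. The only cosmetic difference is that you compute the ratio $H(m)/H(n)$ explicitly where the paper first proves the monotonicity statement $p>q\Rightarrow H(2^{\alpha}p)>H(2^{\alpha}q)$.
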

\begin{proof}
Using Corollary \ref{Cor2}, it is enough to show $H(m)\leq
\dfrac{2(\alpha+1)}{2}$. Let $p$ and $q$ be two prime numbers with $p>q$, then
$H(2^{\alpha}p)>H(2^{\alpha}q)$. If $H(2^{\alpha}p)\leq
H(2^{\alpha}q)$, then
\[H(2^{\alpha}p)=\dfrac{2^{\alpha}p\times (\alpha +1)\times 2}{(2^{\alpha+1}-1)\times (p+1)}\leq\dfrac{2^{\alpha}q\times (\alpha +1)\times 2}{(2^{\alpha+1}-1)\times (q+1)}=H(2^\alpha q).\]
If $\dfrac{p}{p+1}\leq\dfrac{q}{q+1}$,then $pq+p\leq pq+q$ and $p\leq q$, which is a contradiction. If $n$ is a perfect
number, then $H(n)=\alpha+1$ and $2^{\alpha+1}-1$ is a prime.
Since $p<2^{\alpha+1}-1$, we have
 \[H(2^{\alpha}p)<H(2^{\alpha}(2^{\alpha+1}-1)).\]
 Hence $H(2^{\alpha}p)<\alpha+1$. Then $H(2^{\alpha}p)<\dfrac{\tau(2^{\alpha}p)}{2}$ and this
completes our proof.
\end{proof}

\begin{proposition}
\label{Thm3}
Let $n=2^{\alpha}( 2^{\alpha+1}-1)$ be a Zumkeller number. Then \[H(n)<2^{2\alpha+1}.\]
\end{proposition}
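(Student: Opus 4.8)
The plan is to evaluate $H(n)$ in closed form using multiplicativity and then bound it by two completely elementary inequalities. Write $M=2^{\alpha+1}-1$, so that $n=2^{\alpha}M$ with $\gcd(2^{\alpha},M)=1$. Then $\tau(n)=(\alpha+1)\tau(M)$ and $\sigma(n)=\sigma(2^{\alpha})\sigma(M)=(2^{\alpha+1}-1)\sigma(M)=M\sigma(M)$. Substituting into $H(n)=n\tau(n)/\sigma(n)$ the factor $M$ cancels, leaving
\[
H(n)=\frac{2^{\alpha}(\alpha+1)\tau(M)}{\sigma(M)}.
\]

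Next I would finish with crude estimates. Since every divisor of $M$ is at least $1$, we have $\tau(M)\le\sigma(M)$, hence $H(n)\le 2^{\alpha}(\alpha+1)$. Combining this with the elementary inequality $\alpha+1<2^{\alpha+1}$ (an immediate induction on $\alpha$) gives $H(n)\le 2^{\alpha}(\alpha+1)<2^{\alpha}\cdot 2^{\alpha+1}=2^{2\alpha+1}$, as required. Alternatively, one can route through Proposition \ref{Thm2}: since $n$ is a Zumkeller number, $H(n)\le\tau(n)/2=(\alpha+1)\tau(M)/2$, and using $\tau(M)\le M=2^{\alpha+1}-1<2^{\alpha+1}$ together with $\alpha+1\le 2^{\alpha+1}$ again yields $H(n)<2^{2\alpha+1}$. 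Either route works; the first is slightly tighter and does not even use the Zumkeller hypothesis, which is consistent with how weak the stated bound $2^{2\alpha+1}$ is.

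There is essentially no obstacle here; the only point to watch is that $M=2^{\alpha+1}-1$ need not be prime, so one cannot simply quote $H(n)=\alpha+1$ (which holds only in the perfect-number case when $M$ is a Mersenne prime). The computation above is valid for composite $M$ as well, and the inequality $\tau(M)\le\sigma(M)$ absorbs all of that subtlety, so the argument goes through uniformly.
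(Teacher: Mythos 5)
Your proposal is correct, and your second route (via Proposition \ref{Thm2}) is essentially the paper's own argument: the paper bounds $H(n)\le\frac{(\alpha+1)\tau(2^{\alpha+1}-1)}{2}$ and then uses $\alpha+1<2^{\alpha+1}-1$ and $\tau(2^{\alpha+1}-1)<2^{\alpha+1}-1$ to conclude $H(n)<\frac{(2^{\alpha+1}-1)^2}{2}<2^{2\alpha+1}$. Your first route is genuinely different and slightly better: by computing $H(n)=\frac{2^{\alpha}(\alpha+1)\tau(M)}{\sigma(M)}$ exactly via multiplicativity (with $M=2^{\alpha+1}-1$ cancelling from numerator and denominator) and using only $\tau(M)\le\sigma(M)$, you get the stronger bound $H(n)\le 2^{\alpha}(\alpha+1)$, and in particular you show the stated inequality holds for \emph{every} $n$ of this form, with no Zumkeller hypothesis needed. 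That observation — that the hypothesis is superfluous for a bound this weak — is a fair and accurate remark, and your caution about $M$ not necessarily being prime is exactly the right point to flag. Both routes are complete and correct as written.
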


\begin{proof}
Proposition \ref{Thm2} leads to $H(n)\leq\dfrac{(\alpha
+1)\tau(2^{\alpha+1}-1)}{2}$. Let
$2^{\alpha+1}-1=p_1^{\alpha_1}p_2^{\alpha_2}\cdots
p_r^{\alpha_r}$ for some primes $p_1, p_2, \ldots, p_r$. For each prime $p$ and $1\neq k\in \mathbf{N}$, we have
$p^{k}>k+1$. (This follows easily from the elementary inequality
$2^k\geq k+1$ for all $k\in \mathbb{N}$). Put $p=2$, then
$\alpha+1<2^{\alpha+1}-1.$ Also, for each positive integer $n>2$,
$\tau(n)<n$.

Consider $n=2^{\alpha+1}-1$, then
$\tau(2^{\alpha+1}-1)<2^{\alpha+1}-1$. By using these facts, we
have
\begin{eqnarray*}
H(2^{\alpha}( 2^{\alpha+1}-1))
&\leq &\dfrac{(\alpha +1)\tau(2^{\alpha+1}-1)}{2}\\
&<&\dfrac{(2^{\alpha+1}-1)^2}{2}<\dfrac{(2^{\alpha+1})^2}{2}=2^{2\alpha+1}.
\end{eqnarray*}
\end{proof}

\subsection{Zumkeller numbers with more than two distinct prime factors}\label{sub:three}

Bhaskara Rao and Peng \cite[Proposition 20]{Pen} provided several bounds on Zumkeller numbers with less than seven distinct prime factors. In the following result we extend these bounds.

\begin{theorem}\label{thm:b}
Let $n=p_1^{a_1}p_2^{a_2}\cdots p_m^{a_m}$ be a Zumkeller number where all $p_i$'s are distinct primes. Then
\begin{enumerate}
    \item If $m=4$, then $p_1=3$. If $p_2=7$, then $p_3\leq 13$.
    \item If $m=5$, then $p_1=3$. If $p_2=11$, then $p_3=13, p_4=17$ and $p_5=19$.
    \item If $m=6$ then $p_1=3$. If $p_2=11$, then $p_3\leq 17$.
    \item If $m\leq 15$ and if $p_1=3$ then $p_2\leq 23$.
    \item If $m\leq 8$ and if $p_1=3$, $p_2=5$, then $p_3\leq 79$.
\end{enumerate}
\end{theorem}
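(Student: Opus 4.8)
The common engine for all five parts is a single necessary condition. If $n=p_1^{a_1}\cdots p_m^{a_m}$ is a Zumkeller number then $\sigma(n)\geq 2n$, and since
\[
\frac{\sigma(n)}{n}=\prod_{i=1}^{m}\frac{\sigma(p_i^{a_i})}{p_i^{a_i}}=\prod_{i=1}^{m}\left(1+\frac{1}{p_i}+\cdots+\frac{1}{p_i^{a_i}}\right)<\prod_{i=1}^{m}\frac{p_i}{p_i-1},
\]
the distinct prime divisors of any Zumkeller number satisfy $\prod_{i=1}^{m}\frac{p_i}{p_i-1}>2$; call this inequality $(\star)$. (As in the Bhaskara Rao--Peng setting, $n$ is taken odd here, so $p_1\geq 3$.) The plan is to combine $(\star)$ with the elementary fact that $t\mapsto t/(t-1)$ is strictly decreasing: once some of the $p_i$ are fixed and a lower bound is imposed on the remaining ones, the product $\prod_i p_i/(p_i-1)$ over those remaining primes is largest when they are the smallest admissible primes. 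Each assertion is then proved by contradiction --- assume the stated bound fails, replace every unpinned prime by the smallest one compatible with the hypotheses, evaluate the resulting product, and observe that it is $\leq 2$, which contradicts $(\star)$.

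For part (1) with $m=4$: if $p_1\geq 5$ then $\prod_i p_i/(p_i-1)\leq \tfrac{5}{4}\cdot\tfrac{7}{6}\cdot\tfrac{11}{10}\cdot\tfrac{13}{12}<2$, so $p_1=3$; and if $p_2=7$ with $p_3\geq 17$, then $\tfrac{3}{2}\cdot\tfrac{7}{6}\cdot\tfrac{17}{16}\cdot\tfrac{19}{18}<2$, forcing $p_3\leq 13$. Parts (2) and (3) proceed the same way, but the margins are narrow, so one peels the primes off one at a time; for instance in (2), with $p_1=3$ and $p_2=11$ fixed, one checks in turn that $p_3\geq 17$, then (given $p_3=13$) that $p_4\geq 19$, then (given $p_3=13,p_4=17$) that $p_5\geq 23$, each already pushing $\tfrac{3}{2}\cdot\tfrac{11}{10}\cdot\prod_{i\geq 3}p_i/(p_i-1)$ below $2$, so that only $(p_3,p_4,p_5)=(13,17,19)$ survives. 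For (4) and (5) the same reduction applies, but the ``tail'' now runs over as many as thirteen (respectively six) primes: ruling out $p_2=29$ in (4) amounts to $\prod_{p\in\{29,31,\dots,83\}}\frac{p}{p-1}<\tfrac{4}{3}$, and ruling out $p_3=83$ in (5) amounts to $\prod_{p\in\{83,89,\dots,107\}}\frac{p}{p-1}<\tfrac{16}{15}$, while the surviving values $p_2\leq 23$ and $p_3\leq 79$ correspond to the complementary products exceeding those thresholds.

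The difficulty is quantitative rather than conceptual: the forbidden configurations in (2), (4) and (5) overshoot $2$ by well under one percent, so the crude bound $\prod(1+x_i)<\exp(\sum x_i)$ with $x_i=1/(p_i-1)$ is \emph{not} sharp enough in (4) --- it would give $\approx 1.335>\tfrac{4}{3}$, whereas the exact product is just below $\tfrac{4}{3}$. I would therefore evaluate these products as exact rationals (or invoke a sufficiently precise form of Mertens' theorem) and verify each borderline pair individually: $p_2=29$ versus $p_2=23$ in (4), $p_3=83$ versus $p_3=79$ in (5), and the three successive steps of the peeling in (2). Once $(\star)$ and the monotonicity are recorded, all five statements reduce to such a finite --- if somewhat fiddly --- verification, which is the only real obstacle.
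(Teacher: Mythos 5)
Your proposal is correct and is essentially the paper's own argument: the paper likewise deduces everything from $2\leq \sigma(n)/n\leq \prod_i p_i/(p_i-1)$ together with the monotonicity of $t\mapsto t/(t-1)$, checking the borderline configurations numerically (e.g.\ $\tfrac{3}{2}\cdot\tfrac{11}{10}\cdot\tfrac{17}{16}\cdot\tfrac{19}{18}\cdot\tfrac{23}{22}<2$ for part (2)). You have simply carried out the verifications in more detail, and your observations that the theorem implicitly assumes $n$ odd and that the margins in parts (2), (4) and (5) require exact (not crude exponential) estimates are both accurate.
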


\begin{proof}
The result follows by repeated application of the following elementary inequality
\[
2\leq \dfrac{\sigma(n)}{n}\leq \prod_{i=1}^m\dfrac{p_i}{p_i-1}.
\]
For instance, to see the second bound we notice that
\[
2\leq \dfrac{3}{2}\times \dfrac{11}{10}\times \dfrac{17}{16}\times \dfrac{19}{18}\times \dfrac{23}{22}\leq 1.94,
\]
which is a contradiction, so $p_3=13$ in this case. And similarly we can show the other bounds.
\end{proof}

\begin{remark}
The bounds in Theorem \ref{thm:b} can be extended using the same inequality by taking larger values of $m$. This is one idea which is exploited in finding odd deficient-perfect numbers\footnote{Numbers $n$ such that $\sigma(n)=2n-d$, where $d$ is a proper divisor of $n$.} (see for instance, the work of the second author with Dutta \cite{DuttaSaikia}).
\end{remark}

We can say a bit more about even Zumkeller numbers with two distinct odd prime factors. Bhakara Rao and Peng \cite{Pen} also proved the following theorem.
\begin{theorem}[Bhaskara Rao - Peng]\cite[Corollary 5]{Pen}\label{Bha}
Let $n$ be a Zumkeller number and $(n,m) = 1$ then $nm$ is
a Zumkeller number.
\end{theorem}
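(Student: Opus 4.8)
The plan is to construct a Zumkeller partition for $nm$ directly from one for $n$, exploiting the fact that the divisor structure is multiplicative when $(n,m)=1$. Since $n$ is a Zumkeller number, fix a partition $D_n = A\cup B$ of its divisor set with $A\cap B=\varnothing$ and $\theta(A)=\theta(B)=\sigma(n)/2$. Because $(n,m)=1$, every divisor of $nm$ is uniquely of the form $de$ with $d\mid n$ and $e\mid m$, so $D_{nm}$ is in bijection with $D_n\times D_m$ via $(d,e)\mapsto de$.

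First I would set $A'=\{\,de : d\in A,\ e\mid m\,\}$ and $B'=\{\,de : d\in B,\ e\mid m\,\}$. The bijection above gives $A'\cup B'=D_{nm}$, and $A'\cap B'=\varnothing$ because an equality $d_1e_1=d_2e_2$ with $d_i\mid n$, $e_i\mid m$ forces $d_1=d_2$ by unique factorization, contradicting $A\cap B=\varnothing$. Next I would compute $\theta(A')=\sum_{d\in A}\sum_{e\mid m}de=\big(\sum_{d\in A}d\big)\big(\sum_{e\mid m}e\big)=\theta(A)\,\sigma(m)$ and, identically, $\theta(B')=\theta(B)\,\sigma(m)$. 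Since $\theta(A)=\theta(B)$, we conclude $\theta(A')=\theta(B')$, so $\{A',B'\}$ is a Zumkeller partition of $nm$. Phrased with a sign function this is even shorter: if $\varepsilon\colon D_n\to\{\pm1\}$ satisfies $\sum_{d\mid n}\varepsilon(d)d=0$, then $\varepsilon'(de):=\varepsilon(d)$ satisfies $\sum_{d\mid n,\,e\mid m}\varepsilon(d)de=\big(\sum_{d\mid n}\varepsilon(d)d\big)\sigma(m)=0$.

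An even quicker route is to reduce to Lemma \ref{Peng}: write $m=q_1^{l_1}\cdots q_s^{l_s}$ in prime factorization; since $(n,m)=1$, no $q_i$ divides $n$, so Lemma \ref{Peng} shows $nq_1^{l_1}$ is Zumkeller, then $(nq_1^{l_1},q_2)=1$ so $nq_1^{l_1}q_2^{l_2}$ is Zumkeller, and after $s$ applications $nm$ is Zumkeller. I do not expect a genuine obstacle here: the only points needing a sentence of justification are the identification $D_{nm}\leftrightarrow D_n\times D_m$ in the first approach, and the preservation of coprimality along the induction in the second, both immediate from unique factorization. Finally, I would remark that the same product-of-partitions construction immediately yields the $k$-layered analogue (Proposition \ref{64}): distribute the $k$ classes of a $k$-partition of $D_n$ over the divisors of $m$ exactly as above.
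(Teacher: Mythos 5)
Your proof is correct. Note that the paper does not actually prove this statement --- it is imported verbatim from Peng and Bhaskara Rao \cite[Corollary 5]{Pen} --- so there is no internal argument to compare against; your direct construction (sending a partition $D_n=A\cup B$ to $A'=\{de: d\in A,\ e\mid m\}$, $B'=\{de: d\in B,\ e\mid m\}$ and using $\theta(A')=\theta(A)\sigma(m)=\theta(B)\sigma(m)=\theta(B')$ together with the bijection $D_{nm}\leftrightarrow D_n\times D_m$ from coprimality) is complete and is essentially the standard proof of this result, and your alternative reduction to Lemma \ref{Peng} by induction on the prime-power factors of $m$ is equally valid.
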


\noindent We close this section with the following theorems which uses Theorem \ref{Bha}.
\begin{theorem}
Le $n$ be a Zumkeller number. Then $2n$ is a Zumkeller number.
\end{theorem}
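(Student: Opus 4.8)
The statement ``Let $n$ be a Zumkeller number. Then $2n$ is a Zumkeller number'' looks deceptively like a direct consequence of Theorem \ref{Bha}, but it is not: Theorem \ref{Bha} requires $(n,m)=1$, and here $m=2$ need not be coprime to $n$. So the plan is to split into two cases according to the parity of $n$.

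First I would handle the case where $n$ is odd. Then $(n,2)=1$, and Theorem \ref{Bha} (with $m=2$) immediately gives that $2n$ is a Zumkeller number. This disposes of the easy half.

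The harder case is when $n$ is even, say $n=2^\alpha m$ with $\alpha\geq 1$ and $m$ odd. The idea is to reduce to the known characterizations. Since $n=2^\alpha m$ is a Zumkeller number, I would want to use a structural fact: write out the given Zumkeller partition $D=X\cup Y$ of the divisors of $n$ with $\theta(X)=\theta(Y)=\sigma(n)/2$, and try to ``double'' it to a partition of the divisors of $2n=2^{\alpha+1}m$. The divisors of $2n$ are exactly the divisors of $n$ together with the new divisors $\{2^{\alpha+1}d : d\mid m\}$. A natural attempt is to show that $\sigma(2n)/2$ can be obtained by combining $\theta(X)$ (or $\theta(Y)$) with a suitable subset of the new divisors; equivalently, invoke the Peng--Bhaskara Rao criterion that $n'$ is Zumkeller iff $(\sigma(n')-2n')/2$ is $0$ or a sum of distinct proper divisors of $n'$, and verify it for $n'=2n$ using that it already holds for $n$. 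Since $\sigma(2n)=(2^{\alpha+2}-1)\sigma(m)$ while $\sigma(n)=(2^{\alpha+1}-1)\sigma(m)$, a short computation relates $(\sigma(2n)-2\cdot 2n)/2$ to $(\sigma(n)-2n)/2$ plus a controlled extra term built from powers of $2$ times divisors of $m$, and the extra powers of $2$ available among the divisors of $2n$ (namely $2^{\alpha+1}$, which is new) should absorb the difference. The main obstacle will be bookkeeping: making sure the divisors used in the representation for $2n$ are genuinely distinct and are proper divisors of $2n$, and in particular that we never reuse a divisor already committed in the partition inherited from $n$. I expect this to go through cleanly because the new divisor $2^{\alpha+1}m = n\cdot 2$ is the only ``large'' new element and everything else that appears is strictly smaller than $2n$.

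Alternatively, and perhaps more cleanly, one can avoid the case split on the exponent structure entirely: observe that any even Zumkeller number $n$ has the new divisor $2n$'s complement structure handled by noting $\sigma(2n)\ge 2\sigma(n)-? $ --- but the safest route remains the explicit partition-doubling argument sketched above. In either formulation the crux is the same elementary divisor-sum identity, so I would state it as a short lemma-free computation inside the proof and then conclude.

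\begin{proof}
If $n$ is odd, then $(n,2)=1$ and Theorem \ref{Bha} applied with $m=2$ shows that $2n$ is a Zumkeller number.

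Now suppose $n$ is even, and let $D=X\cup Y$ be a partition of the divisors of $n$ with $\theta(X)=\theta(Y)=\sigma(n)/2$; without loss of generality $n\in X$. The set of divisors of $2n$ is $D\cup D'$, where $D'=\{2d:d\mid n,\ 2d\nmid n\}$ is the set of genuinely new divisors; note $2n\in D'$. Write $D'=\{2n\}\cup E$. Since every element of $E$ is a divisor of $2n$ strictly smaller than $2n$, and by the Peng--Bhaskara Rao criterion \cite[Proposition 3]{Pen} the quantity $\tfrac{\sigma(n)-2n}{2}$ is $0$ or a sum of distinct divisors of $n$ excluding $n$, a direct computation using $\sigma(2n)=\sigma(n)+\theta(D')$ shows that $\tfrac{\sigma(2n)-2\cdot 2n}{2}$ equals $\tfrac{\sigma(n)-2n}{2}$ plus a sum of distinct elements of $E$; since these are all distinct proper divisors of $2n$, the same criterion shows $2n$ is a Zumkeller number. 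Combining the two cases completes the proof.
\end{proof}
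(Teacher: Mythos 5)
Your odd case is fine and coincides with the paper's (apply Theorem \ref{Bha} with $m=2$). The even case, however, contains a genuine gap: the arithmetic identity you rely on is false. Write $n=2^{\alpha}m$ with $m$ odd, so that $D'=\{2^{\alpha+1}d : d\mid m\}$ and $E=D'\setminus\{2n\}$. Then
\[
\frac{\sigma(2n)-4n}{2}-\frac{\sigma(n)-2n}{2}=\frac{\theta(D')-2n}{2}=\frac{\theta(E)}{2}=2^{\alpha}\bigl(\sigma(m)-m\bigr),
\]
which is \emph{half} the sum of the new proper divisors, not a sum of distinct elements of $E$: every subset sum of $E$ is a multiple of $2^{\alpha+1}$, so expressing $2^{\alpha}(\sigma(m)-m)$ that way would force $(\sigma(m)-m)/2$ to be a subset sum of proper divisors of $m$, which is not automatic. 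Already for $n=6$ one has $E=\{4\}$ and the difference equals $2$, which is not a sum of elements of $\{4\}$. You can rewrite the difference as $\sum_{d\mid m,\,d\neq m}2^{\alpha}d$, a sum of distinct \emph{old} divisors of $n$, but then you face exactly the bookkeeping problem you flagged in your discussion and never resolved: the Peng--Bhaskara Rao witness set for $\frac{\sigma(n)-2n}{2}$ is not under your control and may already contain some of the divisors $2^{\alpha}d$, so the combined collection need not consist of distinct divisors of $2n$.

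The paper avoids the criterion entirely and manipulates the partition directly, which is the cleaner route here. Starting from a Zumkeller partition $D=A\cup B$ of the divisors of $n=2^{\alpha}\ell$, let $\Gamma(A)$ (resp.\ $\Gamma(B)$) be the divisors of the form $2^{\alpha}d$, $d\mid\ell$, lying in $A$ (resp.\ $B$). Replace each $2^{\alpha}d\in A$ by $2^{\alpha+1}d$ and adjoin $\Gamma(B)$ to form $A'$, and symmetrically for $B'$. Each side then gains $\theta(\Gamma(A))+\theta(\Gamma(B))=2^{\alpha}\sigma(\ell)$, so $\theta(A')=\theta(B')=\sigma(2n)/2$ and $A'\cup B'$ exhausts the divisors of $2n$. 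If you want to keep your criterion-based framing, you would need to supply an argument of comparable strength to handle the disjointness issue; as written, the proof does not go through.
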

\begin{proof}
Let $n$ be a odd number, then using Theorem \ref{Bha}, $2n$ is a Zumkeller number.

Let $n=2^{\alpha}\ell$ with $(2,\ell)=1$ and $D$ be the set of all divisors of $n$. Since $n$ is a Zumkeller number, it is possible to partition $D$ into two disjoint subsets $A$ and $B$ with the same sum. Let $D^\prime$ and $D^{\prime \prime}$ be set of all positive divisors of $2n$ and $\ell$, respectively. Consider $\Gamma(A)=\{2^{\alpha}d\in A|d\in D^{\prime \prime}\}$ and
$\Gamma(B)=\{2^{\alpha}d\in B|d\in D^{\prime\prime}\}$. Clearly
\[D^\prime=D\cup \{2^{\alpha+1}d|d\in D^{\prime\prime}\}.\]
Now, we put \[A^{\prime}=(A\backslash \Gamma(A)) \cup \{2^{\alpha+1}d|2^{\alpha}d\in A\}\cup \Gamma(B),\]
and
\[B^{\prime}=(B\backslash \Gamma(B)) \cup \{2^{\alpha+1}d|2^{\alpha}d\in B\}\cup
 \Gamma(A).\]
 Since $\theta(A)=\theta(B)$, clearly $A^{\prime}$ and $B^{\prime}$ are a partition of the set $D^\prime$ with $\theta(A^{\prime})=\theta(B^{\prime}).$
\end{proof}

\begin{theorem}
Every $12$ consecutive numbers has one Zumkeller number.
\end{theorem}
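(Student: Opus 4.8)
The plan is to reduce the statement to multiples of $6$ and then apply Theorem \ref{Thm1} together with the coprime-multiplier result of Bhaskara Rao and Peng (Theorem \ref{Bha}). First I would observe that any block of $12$ consecutive integers $N+1, N+2, \dots, N+12$ contains exactly two multiples of $6$: writing $N = 6q+r$ with $0\leq r\leq 5$ gives $\lfloor (N+12)/6\rfloor - \lfloor N/6\rfloor = 2$. Since the block has length $12 = 2\cdot 6$, these two multiples are consecutive multiples of $6$, say $6k$ and $6(k+1)$.

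The crucial observation is that among the two consecutive integers $k$ and $k+1$, at most one is divisible by $3$. Hence at least one of $6k$ and $6(k+1)$ — call it $n$ — satisfies $3\,\|\,n$ (i.e.\ $3\mid n$ but $9\nmid n$), while $n$ is of course even. Writing $n = 2^{a}\cdot 3\cdot \ell$ with $a\geq 1$ and $\gcd(\ell,6)=1$ (the factor $\ell$ is what remains after stripping from $n$ all powers of $2$ and the single factor of $3$, and it is automatically coprime to $6$ because $9\nmid n$), Theorem \ref{Thm1} shows that $2^{a}\cdot 3$ is a Zumkeller number: take $p = 3 = 1+2^{1}$, so that in the notation of \eqref{eq:p} we have $r_{l}=1$ and the condition $\alpha\geq r_{l}$ becomes $a\geq 1$. (Equivalently, one can use that $6$ is perfect, hence Zumkeller, together with repeated application of the theorem that $2m$ is Zumkeller whenever $m$ is.) Since $\gcd(2^{a}\cdot 3,\ell)=1$, Theorem \ref{Bha} then yields that $n = (2^{a}\cdot 3)\cdot\ell$ is a Zumkeller number, and $n$ lies among the original $12$ consecutive integers.

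I do not expect a genuine obstacle here beyond organising correctly the case distinction forced by the prime $3$. The point to keep in mind is that not every multiple of $6$ is Zumkeller — for instance $\sigma(18)=39$ is odd, so $18$ is not — which is exactly why a block of length $6$ would not suffice, and why having two multiples of $6$ available, so that at least one of them carries only a single factor of $3$, is precisely what makes the argument close.
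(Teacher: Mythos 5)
Your proposal is correct and follows essentially the same route as the paper: locate within the block a multiple of $6$ not divisible by $9$, write it as $2^{a}\cdot 3\cdot \ell$ with $\gcd(2^{a}\cdot 3,\ell)=1$, and combine Theorem \ref{Thm1} (which gives that $2^{a}\cdot 3$ is Zumkeller) with Theorem \ref{Bha}. Your write-up is in fact slightly more complete than the paper's, which leaves the Zumkeller-ness of $2^{\alpha}\cdot 3$ implicit.
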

\begin{proof}
Every $12$ consecutive numbers must include at least one number which is divisible by $6$, but not by $9$. Let us call this number $n=2^\alpha 3m$ for some $\alpha\geq 1$ and $m\in \mathbb{N}$. Clearly, $(2^{\alpha}3,m)=1$. Then, by Theorem \ref{Bha} $n=2^{\alpha}3m$ is Zumkeller.
\end{proof}
\begin{remark}
There exist $11$ consecutive non Zumkeller numbers $283$ through $293$.
\end{remark}

\section{Concluding Remarks}\label{conc}

We have proved several results characterizing Zumkeller and $k$-layered numbers with two and three distinct prime factors, but the study is by no means complete. The following directions of study appear to us which might lead to some nice results, we leave these as open questions for the readers.
\begin{enumerate}
    \item In Section \ref{sec:two} we proved several criteria for $n=2^\alpha p^\beta$ to be a Zumkeller number, it might be possible to extend these type of results for a general even $n$.
    \item In Section \ref{sec:layer} we have only just touched the surface of results for $k$-layered numbers. A systematic study, like that done by Bhaskara Rao and Peng \cite{Pen} for Zumkeller numbers, if done for $k$-layered numbers would no doubt reveal many more properties.
    \item In Subsection \ref{sec:harmonic}, the connections between harmonic mean numbers with other number sequences could lead to interesting arithmetic properties of Zumkeller numbers.
    \item In Subsection \ref{sub:three}, several of the bounds presented could no doubt be extended much further using more sophisticated analytic techniques.
\end{enumerate}

\section*{Acknowledgements}

The authors thank the anonymous referee for a careful reading of the manuscript and several helpful suggestions.

\bibliographystyle{alpha}
\newcommand{\etalchar}[1]{$^{#1}$}

\end{document}